\newtheorem{lemma}{Lemma}
\newtheorem{theorem}{Theorem}
\newtheorem{Def}{Definition}
\newtheorem{example}{Example}
\newtheorem{remark}{Remark}
\newcommand{\h}[1]{\mathbf{#1}}
\DeclareMathOperator{\sign}{sign}
\newcommand{\nn}{\nonumber}
\begin{document}
\bibliographystyle{IEEEtran}
% paper title
\title{ On NP-Hardness of $L_1/L_2$ Minimization and Bound Theory of Nonzero Entries in Solutions }

\author{Min Tao, Xiao-Ping  Zhang,~\IEEEmembership{Fellow,~IEEE} and Yun-Bin Zhao
\thanks{The work of M. Tao is supported partially by the Natural Science Foundation of China (No.
12471289,\ 12371318) and partially by
Jiangsu University QingLan Project. The work of X.-P. Zhang was supported in part by Shenzhen Key Laboratory of Ubiquitous Data Enabling (No. ZDSYS20220527171406015), and by Tsinghua Shenzhen International Graduate School-Shenzhen Pengrui Endowed Professorship Scheme of Shenzhen Pengrui Foundation. The work of Y.-B. Zhao was partially supported by the National Natural Science Foundation of China
(No. 12071307, 12471295) and Hetao Shenzhen-Hong Kong Science and Technology  Innovation Cooperation Zone Project (HZQSWS-KCCYB-2024016)}
\thanks{M. Tao is with the School of Mathematics, National Key Laboratory for Novel Software Technology, Nanjing University, Nanjing, 210093, Republic of China. Email: taom@nju.edu.cn}
\thanks {X.-P. Zhang is with the Shenzhen Key Laboratory of Ubiquitous Data Enabling, Shenzhen International Graduate School, Tsinghua University, Shenzhen 518055, China. Email: xpzhang@ieee.org}
\thanks{Y.-B. Zhao is with the Shenzhen International Center for Industrial and Applied Mathematics, SRIBD, Chinese University of Hong Kong, Shenzhen 518116, China. Email: yunbinzhao@cuhk.edu.cn}
}

\maketitle

\begin{abstract}
%The \(L_1/L_2\) norm ratio minimization has gained significant attention as a sparsity  prompting approach, and
%found many successful applications across various domains.
%A variety efficient algorithms have been developed to computing the stationary points of these $L_1/L_2$ minimization problem.
%However, their computational complexity remains open. In this paper, we unexpectedly reveal that
% finding the global minimum of both constrained and unconstrained \(L_1/L_2\) models is strongly NP-Hard.
%Simultaneously, we establish uniform upper bounds in $L_2$ norm for any local minimizer of constrained and unconstrained \(L_1/L_2\)-minimization models. Furthermore, we derive some upper/lower bound for the magnitudes of nonzero entries in any local minimizer of the unconstrained model
%to help classify nonzero entries to prompt sparsity.
%Moreover, we point out that finding the global minimum of constrained and unconstrained \(L_p\) (\(0 < p \leq 1\)) over \(L_q\) (\(1 < q < +\infty\)) models is also strongly NP-Hard.

The \(L_1/L_2\) norm ratio has gained significant attention as a measure of sparsity due to three merits: sharper approximation to the \(L_0\) norm compared to the  \(L_1\) norm,  being parameter-free and scale-invariant, and exceptional performance with highly coherent matrices. These properties have led to its successful application across a wide range of fields. While several efficient algorithms have been proposed to compute stationary points for \(L_1/L_2\) minimization problems, their computational complexity has remained open. In this paper, we prove that finding the global minimum of both constrained and unconstrained \(L_1/L_2\) models is strongly NP-hard.

In addition, we establish uniform upper bounds on the \(L_2\) norm for any local minimizer of both constrained and unconstrained \(L_1/L_2\) minimization models. We also derive upper and lower bounds on the magnitudes of the nonzero entries in any local minimizer of the unconstrained model, aiding in classifying nonzero entries. Finally, we extend our analysis to demonstrate that the constrained and unconstrained \(L_p/L_q\) (\(0 < p \leq 1, 1 < q < +\infty\)) models are also strongly NP-hard.

 %In particular, ADMM$_p^+$ reduces computational time by about $95\%\sim99\%$ while achieving a much higher accuracy than the commonly used  scaled gradient projection method for the wavelength misalignment problem.
\end{abstract}
\begin{IEEEkeywords}
	Nonconvex programming, global optimization,
sparse signal recovery, NP-hardness, computational complexity
\end{IEEEkeywords}

%\begin{keywords} Nonconvex programming, Global optimization,
%Sparse solution recovery
%\end{keywords}
% REQUIRED
	%\begin{AMS}
%	90C26,   90C51
%	\end{AMS}

	% REQUIRED
	\section{Introduction}
\label{RelatedW}
Compressive sensing (CS) aims to recover sparse signals using a set of undersampled linear measurements. Consequently, the fundamental problem in CS can be formulated as the following \( L_0 \)-minimization problem:

\begin{eqnarray}\label{L0}
\min _{{\h x} \in {\cal X}} \|{\h x}\|_{0}, \quad \text{subject to} \quad A {\h x} = {\h b},
\end{eqnarray}

\noindent where \( A \in \mathbb{R}^{m \times n} \) with \( m \ll n \) is a given sensing matrix, and \( {\h b} \in \mathbb{R}^m \) is the observation (measurement) vector (\( {\h b} \neq \mathbf{0} \)). Here, \( \|\cdot\|_{0} \) denotes the \( L_0 \) norm, which counts the number of nonzero entries in a vector, and \( {\cal X} \) represents either \( \mathbb{R}^n_+ \) or \( \mathbb{R}^n \). Problem (1) is known to be NP-hard \cite{BKNa95}, indicating that it cannot be solved in polynomial time as the problem size increases.

As a practical alternative, the \( L_1 \) penalty regularization can be employed, replacing the \( L_0 \) norm with the \( L_1 \) norm, resulting in the following optimization problem:

\begin{equation}\label{L1}
\min _{{\h x} \in {\cal X}} \|{\h x}\|_{1}, \quad \text{subject to} \quad A {\h x} = {\h b}.
\end{equation}

\noindent Several exact recovery conditions have been developed to ensure that the solution of problem (\ref{L1}) coincides with the solution of (\ref{L0}). These conditions include the exact \( L_1 \)-norm recovery condition, which states that a solution \( \h x_0 \) to \( A \h x_0 = \h b \) satisfies \( \|\h x_0\|_0 < \frac{1}{2}\left(1 + \frac{1}{\mu(A)}\right) \), where \( \mu(A) \) is the mutual coherence of \( A \) \cite{pnas.0437847100}. Other conditions include the restricted isometry property (RIP) \cite{1542412,6589953}, the null space condition \cite{AWR09}, and the restricted eigenvalue condition (ERC) \cite{1302316}, etc. The resulting model in (2) is convex and can be solved to global optimality in polynomial time \cite{Nesterov1994PA}.

While \( L_1 \) minimization is computationally tractable, it tends to bias toward larger coefficients \cite{FL01}, hindering true sparsity. To address this limitation, several nonconvex relaxation techniques have emerged, such as the \( L_p \) model (for \( 0 < p < 1 \)) \cite{CR07}, the transformed \( L_1 \) norm \cite{Nikolova00}, and the capped \( L_1 \) norm \cite{PELEG2008375}, all aimed at promoting better sparsity recovery. In particular, the \( L_1/L_2 \) ratio has gained attention due to its parameter-free and scale-invariant characteristic, providing a sharper approximation to \( L_0 \) norm  compared to the $L_1$ norm (see the landscapes of \( \|\h x\|_0 \), \( \|\h x\|_1 \), and \( \|\h x\|_1/\|\h x\|_2 \) in Fig. \ref{Landscape}) and demonstrating greater efficiency when dealing with highly coherent sensing matrices \cite{Tao20,TaoZhang23,YEX14,RWDL19,WYYL20}. As a result, \( L_1/L_2 \) minimization \cite{WCG78,YEX14} has found successful applications across various domains, highlighting its effectiveness and practicality \cite{6638838,KTF11,Audrey15,PHAM201796,JIA2018198}.

\begin{figure*}[htbp!]
	\vspace{-0cm}\centering{\vspace{0cm}
		\includegraphics[scale=0.28]{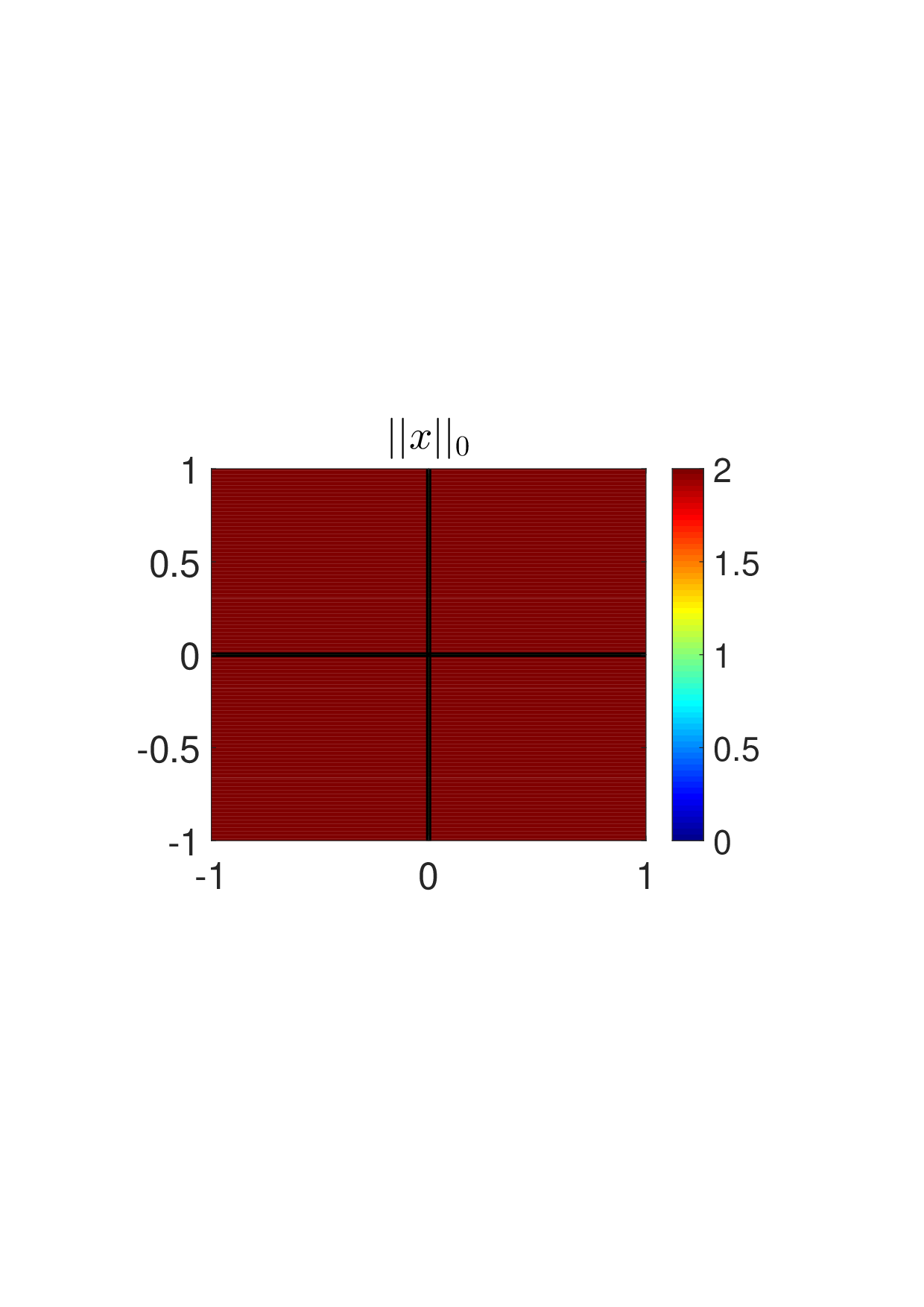}\!\!\!\!\!\!\!\!\!\!\!\!\!\!
			\includegraphics[scale=0.28]{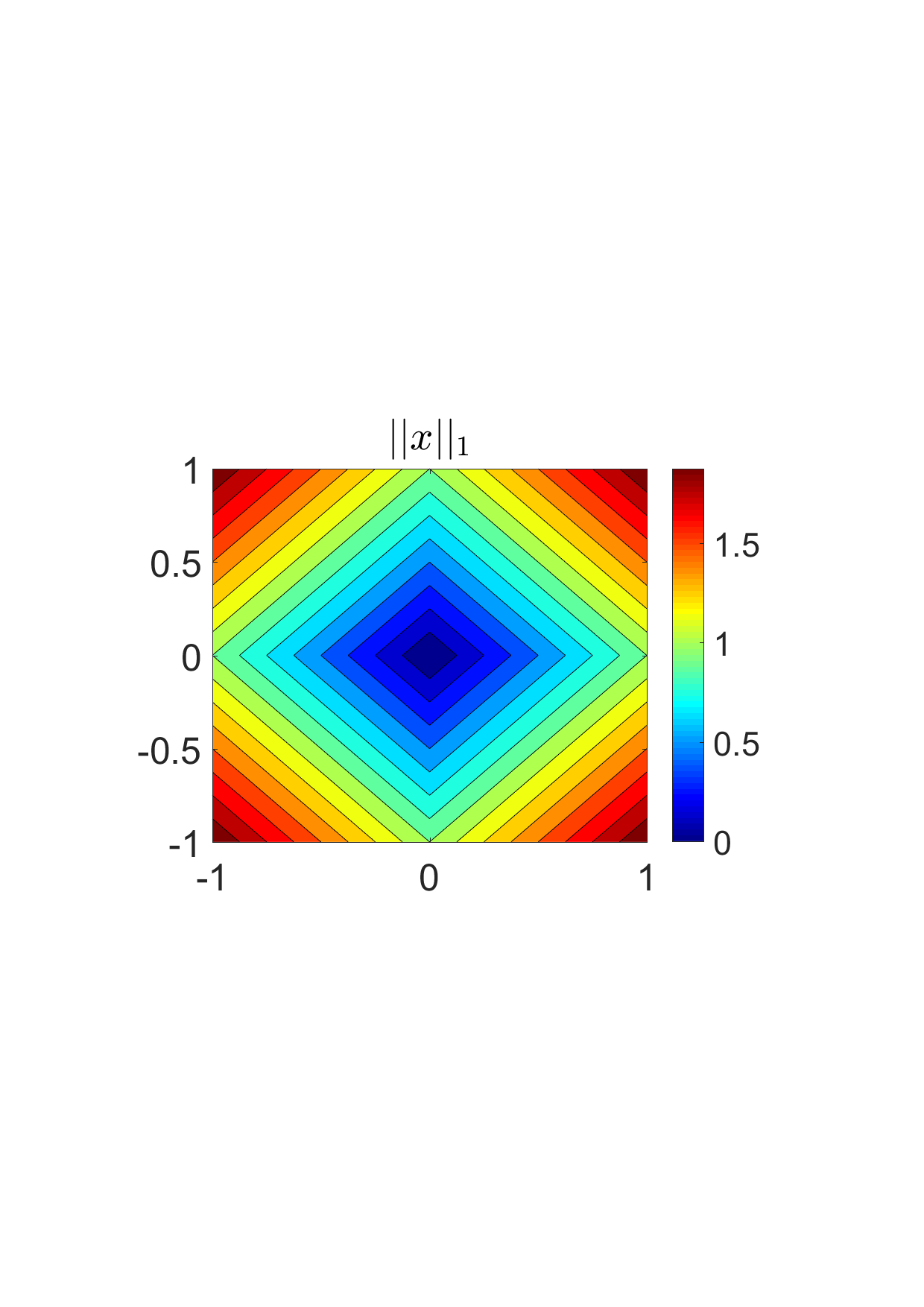}\!\!\!\!\!\!\!\!\!\!\!\!
			\includegraphics[scale=0.28]{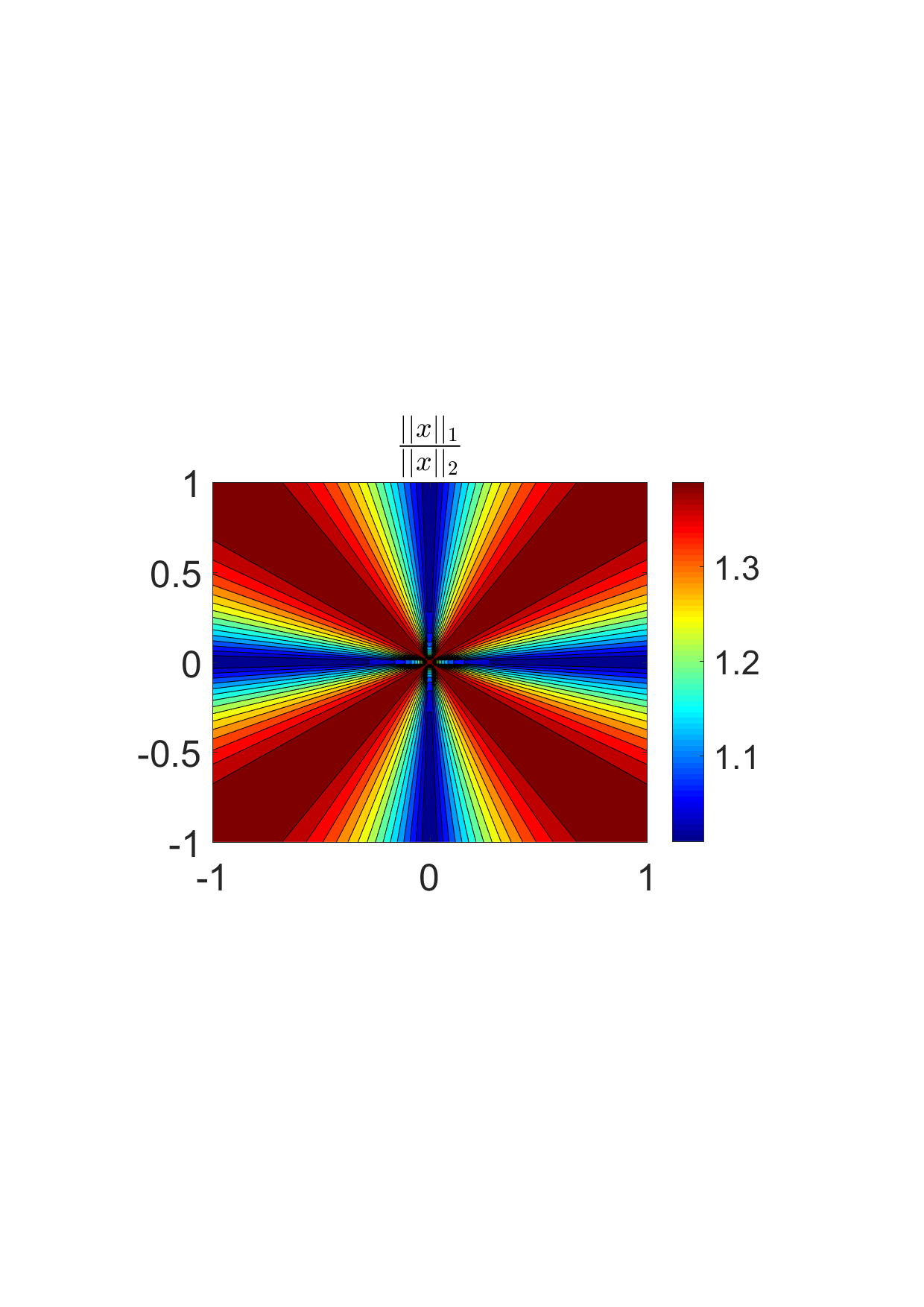}
	} \vspace{-0cm}\centering{\caption{The landscapes of different norm: From left to right: $\|{\h x}\|_0$, $\|{\h x}\|_1$ and $\frac{\|{\h x}\|_1}{\|{\h x}\|_2}$ in $\mathbb{R}^2$ space.
}}
	\label{Landscape}\end{figure*}
\subsection{Motivation and related works}
Various exact recovery conditions from \(L_1/L_2\) to \(L_0\) minimization have been studied by Yin et al.~\cite{YEX14}, Xu et al.~\cite{XU2021486}, and Zhou and Yu~\cite{ZHOU2019247}. The nonconvex and nonsmooth nature of \(L_1/L_2\) minimization introduces significant challenges in obtaining global solutions. Consequently, several algorithms have been developed to compute stationary points instead. For example, the alternating direction method of multipliers (ADMM) with double variable duplication \cite{RWDL19}, and the accelerated schemes \cite{WYYL20}, have been applied to the constrained  $L_1/L_2$ model.

On the other hand, methods such as the scaled gradient projection method (SGPM) \cite{ELX13, YEX14}, ADMM\(_p\) \cite{Tao20}, and ADMM\(_p^+\) \cite{TaoZhang23} are designed for the unconstrained $L_1/L_2$ model. The moving balls approximation algorithm \cite{ZengYuPong20} has been developed to address the constrained model in both noiseless and noisy scenarios.

Additionally, a DCA-type algorithm has been proposed to solve the sorted \(L_1/L_2\) minimization problems, which assign higher weights to indices with smaller absolute values and lower weights to larger values, thereby promoting sparsity \cite{Sorted24}. The proximal subgradient algorithm with extrapolation techniques \cite{BRDL22} is also employed to tackle nonconvex and nonsmooth fractional programming problems, including \(L_1/L_2\) minimization.

Recently, the variational properties of \(L_1/L_2\) minimization, including partly smoothness and prox-regularity, were rigorously established in \cite{10542092}. These findings demonstrated that proximal-friendly methods, such as ADMM\(_p\) \cite{Tao20} and ADMM\(_p^+\) \cite{TaoZhang23}, preserve the finite identification property \cite{LW04}, which enables the identification of low-dimensional manifolds within a finite number of iterations. A two-phase heuristic acceleration framework, combining ADMM\(_p\)/ADMM\(_p^+\) with a globalized semismooth Newton method, was developed in \cite{10542092}, achieving high-order convergence rates while significantly reducing computational complexity. However, these methods typically converge to d-stationary \cite{PangRazAlv,TaoZhang23} or critical points rather than global solutions.

For the analysis of global convergence to a d-stationary  or critical point in the SGPM \cite{ELX13, YEX14}, the accelerated schemes \cite{WYYL20}, ADMM\(_p\) \cite{Tao20}, ADMM\(_p^+\) \cite{TaoZhang23}, and the two-phase heuristic acceleration algorithm \cite{10542092}, it is usually assumed that the primal sequence \(\{\h x^k\}\) is bounded. However, this assumption is difficult to validate.

Moreover, \cite{ChenXuYe} established lower bounds on the absolute values of the nonzero entries of any local optimal solution of the unconstrained model with the \(\|{\h x}\|_p^p\)  \((0 < p < 1)\) regularizer. These bounds facilitate the identification of nonzero entries, raising the question of whether similar bounds can be derived for local minimizers of the unconstrained \(L_1/L_2\) model.

In terms of computational complexity, the constrained model with the \(\|{\h x}\|_p^p\) \((0 < p < 1)\) regularizer, has been shown to be strongly NP-hard \cite{Ge2011} for both \(\mathcal{X} = \mathbb{R}^n_+\) and \(\mathbb{R}^n\). The unconstrained model with the \(\|{\h x}\|_p^p\) regularizer, is also confirmed to be strongly NP-hard \cite{Chen14}. Additionally, \cite{HuoChen} proved hardness results for a relaxed family of penalty functions involving the \(L_2\) loss, where the regularization terms include \(L_0\) hard-thresholding \cite{AF01} and SCAD \cite{FL01}. However, none of these results directly address whether finding the global minimum of the constrained and unconstrained $L_1/L_2$ models is strongly NP-hard.

\subsection{This paper}
We focus on the following constrained $L_1/L_2$ model:

\begin{eqnarray}\label{L1o2Con}
\begin{array}{ll}
\min\limits_{{\h x}}& \displaystyle{\frac{\|\mathbf{x}\|_{1}}{\|\mathbf{x}\|_{2}}} \\[0.2cm]
\text{subject to} & \mathbf{x} \in \mathcal{H}:=\{{\h x} \in {\cal X} \mid A {\h x} = {\h b}\},
\end{array}
\end{eqnarray}
and the unconstrained one:

\begin{eqnarray}\label{L1o2uncon}
\min_{{\h x}\in {\cal X}} \gamma \frac{\|{\h x}\|_{1}}{\|{\h x}\|_{2}}+\frac{1}{2}\|A {\h x}-\mathbf{b}\|_{2}^{2},
\end{eqnarray}
where $A \in \mathbb{R}^{m \times n}$ ($m \ll n$) is a given sensing matrix, ${\h b} \in \mathbb{R}^n$ (${\h b} \neq \mathbf{0}$), and $\gamma > 0$ is a balancing parameter.

First, we derive a uniform upper bound, in terms of $L_2$-norm, for any local minimizer of both the constrained model (\ref{L1o2Con}) and the unconstrained model (\ref{L1o2uncon}). This result shows that the boundedness typically required for global convergence analysis of first-order algorithms, such as those in \cite{ELX13, YEX14, Tao20,TaoZhang23,10542092,WYYL20}, for solving these models can be imposed with \emph{a priori}.

Second, we establish either a lower or upper bound for the nonzero entries in any local minimizer of the unconstrained model (\ref{L1o2uncon}). These theoretical results sheds light on the following two phenomena:

\begin{enumerate}
   \item As noted in \cite{WYYL20}, when the dynamic range of the ground-truth signal is larger, $L_1/L_2$ minimization yields a smaller relative error. The established bounds in this paper can help explain this phenomenon.

\item In our previous work \cite{10542092}, we introduced a hard-shrinkage step within the heuristic acceleration framework on active manifolds (see \cite[Algorithm 1]{10542092}). This step, applied between Phase I and Phase II, significantly improved recovery accuracy when processing real data. Further discussions can be found in \cite[Section V.C]{10542092}.

\end{enumerate}

Last but not least, to the best of our knowledge, the computational complexity of $L_1/L_2$ minimization remains unexploited.
 This raises the following fundamental open questions:
\begin{itemize}
\item ``{\it Whether the constrained model (\ref{L1o2Con}) and the unconstrained model (\ref{L1o2uncon}) are NP-hard?}"
\item
``{\it Whether the constrained model (\ref{L1o2Con}) and the unconstrained model (\ref{L1o2uncon}) are strongly NP-hard?}"
\end{itemize}
We answer both questions affirmatively.
In light of the growing interest in ratio minimization problems \cite{ZHOU2021, ZHOU2019247, XU2021486}, we also extend our analysis to the complexity of $L_p$ ($0 < p \leq 1$) over $L_q$ ($1 < q < +\infty$) minimization problem.

\subsection{Our contributions}
In summary,  the novelties in this paper are three-fold:
\begin{itemize}
\item We provide unified upper bounds in terms of the $L_2$-norm for any local minimizer of (\ref{L1o2Con}) and (\ref{L1o2uncon}),
 respectively.
 Simultaneously, we derive the upper and lower bounds for the nonzero entries in any local minimizer of (\ref{L1o2uncon});

\item We prove that both (\ref{L1o2Con}) and (\ref{L1o2uncon}) are strongly NP-hard;

\item We prove that minimization of the $L_p$ norm ($0 < p \leq 1$) over the $L_q$ norm ($1 < q < +\infty$) is strongly NP-hard.

\end{itemize}
\medskip
\subsection{Organization} We structure the rest of this paper as follows:  Section \ref{Sec-Preliminaries} presents notations and several useful lemmas. In Section \ref{wellDef}, we elaborate the bound theory for $L_1/L_2$ minimization. In Section \ref{NPHard}, we prove that both (\ref{L1o2Con}) and (\ref{L1o2uncon}) are strongly NP-hard. In Section \ref{Lpq}, we point out that \(L_p\) (\(0 < p \leq 1\)) over \(L_q\) (\(1 < q < +\infty\)) minimization is also strongly NP-hard.
 Discussions and conclusions are given in Sections \ref{dis}.

%\section{Related works}\label{RelatedW}

\section{Preliminary}\label{Sec-Preliminaries}
\subsection{Notations} \label{Notations}
Let bold letters denote vectors, e.g., ${\bm x} \in \mathbb{R}^{n}$, and let $x_{i}$ and $|\bm{x}|$ denote the $i$-th entry and the absolute value of the vector, respectively. Given two vectors ${\bm x}, {\bm y} \in \mathbb{R}^{n}$, $\langle {\bm x}, {\bm y} \rangle$ denotes their standard inner product: $\sum_{i=1}^n x_i y_i$. The notation $\|\bm{x}\|_p$ refers to the $p$-norm, defined as
\[
\|\bm{x}\|_p = \left(\sum_{i=1}^n |x_i|^p \right)^{1/p}
\]
for $0 < p < \infty$. The subscript $p$ in $\|\cdot\|_p$ is omitted when $p=2$. Define $[n]:=\{1, 2, \ldots, n\}$. The symbol $\sharp(\mathcal{D})$ denotes the cardinality of $\mathcal{D}$. $\sign(\h x)$ is defined as a vector of the same length as $\h x$, with its $i$-th component equal to zero if $x_i = 0$, and otherwise equal to the sign of each component of ${\h x}$. ${\text{supp}}({\bm x}):=\{i \in [n] \ | \ x_i \neq 0\}$ is the support of ${\bm x}$. The notation $\otimes$ represents a Kronecker product. Given two matrices $A_r = (a^{(r)}_{i,j}) \in \mathbb{R}^{m \times n}$ and $A_c \in \mathbb{R}^{s \times t}$, $A_r \otimes A_c = (a^{(r)}_{i,j} A_c)$. The notation ${\text{vec}}(\cdot)$ refers to the stacking of columns. Let $\mathbb{R}^n_+$ and $\mathbb{R}^n_-$ denote the sets of nonnegative and nonpositive vectors, respectively, and $\mathbb{R}^n_+ / \mathbb{R}^n$ refers to either $\mathbb{R}^n_+$ or $\mathbb{R}^n$. For a matrix $A \in \mathbb{R}_+^{m \times n}$, this means each entry of $A$ is nonnegative. The vector ${{\bm e}}^{(n)}$ represents a vector with all entries equal to $1$ in $\mathbb{R}^n$, while ${\bm e}_i$ denotes the vector with $1$ in the $i$-th entry and $0$ elsewhere. We omit the superscript ${{\bm e}}^{(n)}$ when there is no ambiguity. The symbol $I_{n}$ denotes the $n \times n$ identity matrix. For a set of vectors ${\cal U} = \{{\bm v}_1, \dots, {\bm v}_r\}$, ${\text{span}}({\cal U})$ represents the linear space spanned by ${\cal U}$. Given a matrix $A \in \mathbb{R}^{m \times n}$ or a vector $\bm{x} \in \mathbb{R}^{n}$ and an index set $\Lambda \subseteq [n]$, we use $A_{\Lambda}$ and $\bm{x}_{\Lambda}$ to denote $A[:, i]_{i \in \Lambda}$ and $\bm{x}[i]_{i \in \Lambda}$, respectively. For a square matrix $A$, the notation $A \succ {\bf 0}$ means that $A$ is positive definite. The symbols $\mathcal{L}_{c}^*$ and $\mathcal{L}_{u}^*$ denote the local minimizers of the constrained problem (\ref{L1o2Con}) and the unconstrained problem (\ref{L1o2uncon}), respectively. A $\mathcal{C}^2$ function $f$ is twice continuously differentiable. The set $\mathbb{R} \setminus \{0\}$ represents the real numbers excluding zero. Throughout the paper, we assume that $\bm{b} \neq 0$, $\mathcal{H} \neq \emptyset$, and ${\cal X} = \mathbb{R}^n_+ / \mathbb{R}^n$.

\subsection{Preparatory lemmas}

In the following, we present the definition of dynamical range of a signal.
\begin{Def}\label{dyrang}
    Given a signal ${\h x}\in \mathbb{R}^n_+ / \mathbb{R}^n$, let $\Lambda:={\text{supp}}({\h x})$,
    we define the dynamical range of ${\h x}$ as
    \begin{eqnarray*}R({\h x}):=\frac{\max_{i\in\Lambda}|x_i|}{\min_{i\in{\Lambda}}|x_i|}.\end{eqnarray*}
\end{Def}
\noindent Next, we provide several lemmas that will be used in later analysis.
\begin{lemma}\label{pqineq} (i) If \( p \in (0,1) \) and \( a, b \geq 0 \), then
\[
(a + b)^p \leq a^p + b^p,
\]
with equality holding if and only if either \( a \) or \( b \) is zero;\\
(ii) If \( q \in (1,+\infty) \) and \( a, b \geq 0 \), then
\[
(a + b)^q \ge a^q + b^q,
\]
with equality holding if and only if either \( a \) or \( b \) is zero.

\end{lemma}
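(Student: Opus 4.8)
The statement to prove is Lemma~\ref{pqineq}, the two elementary power inequalities. Here is how I would approach it.

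\medskip

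\noindent\textbf{Proof plan.} The plan is to reduce each inequality to a one-variable statement by homogeneity and then analyze the sign of a scalar function. For part (i), I would first dispose of the trivial cases: if $a=0$ or $b=0$, both sides coincide, which also establishes the ``only if'' direction of the equality claim once strict inequality is shown in the remaining case. So assume $a,b>0$. Dividing through by $(a+b)^p$ and setting $t:=a/(a+b)\in(0,1)$, the claim becomes $1 \le t^p + (1-t)^p$ for all $t\in(0,1)$, with strict inequality. To see this, note that for $p\in(0,1)$ and $t\in(0,1)$ we have $t < 1$, hence $t^p = t^{p-1}\cdot t > t$ because $t^{p-1}>1$ (the exponent $p-1$ is negative and $t<1$); similarly $(1-t)^p > 1-t$. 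Adding the two strict inequalities gives $t^p+(1-t)^p > t + (1-t) = 1$, which is exactly what is needed. This simultaneously proves the inequality is strict whenever both $a$ and $b$ are nonzero, completing the ``if and only if'' characterization.

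\medskip

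\noindent\textbf{Part (ii).} The argument is symmetric. Again the cases $a=0$ or $b=0$ give equality, so assume $a,b>0$. With the same substitution $t=a/(a+b)\in(0,1)$, dividing by $(a+b)^q$ turns the claim into $t^q+(1-t)^q \le 1$ with strict inequality. Now for $q>1$ and $t\in(0,1)$ we have $t^q = t^{q-1}\cdot t < t$ since $t^{q-1}<1$ (positive exponent, base less than one), and likewise $(1-t)^q < 1-t$; summing yields $t^q+(1-t)^q < t+(1-t) = 1$. Strictness holds exactly because both $t$ and $1-t$ lie strictly between $0$ and $1$, which is equivalent to $a>0$ and $b>0$, giving the equality characterization.

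\medskip

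\noindent\textbf{Remarks on difficulty.} There is essentially no obstacle here; the only thing to be careful about is bookkeeping of the equality case, namely verifying that strictness in the reduced scalar inequality is genuinely equivalent to $ab\neq 0$, and conversely that $ab=0$ forces equality in the original inequality. An alternative to the monomial-comparison trick would be to invoke concavity of $s\mapsto s^p$ on $[0,\infty)$ for part (i) (superadditivity of concave functions vanishing at the origin) and convexity of $s\mapsto s^q$ for part (ii), but the direct scalar computation above is shorter and keeps the equality analysis transparent, so that is the route I would take.
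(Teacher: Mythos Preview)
Your proof is correct. It differs from the paper's argument in two minor but genuine ways. The paper substitutes $t=a/b$ and studies $f(t)=(1+t)^p-1-t^p$, showing $f'(t)<0$ on $(0,\infty)$ so that $f(t)<f(0)=0$; you instead substitute $t=a/(a+b)\in(0,1)$ and use the direct monomial comparison $t^p>t$ and $(1-t)^p>1-t$ (resp.\ the reverse for $q>1$) to avoid any calculus. Both reductions are standard; your route is slightly more elementary since it bypasses differentiation, while the paper's derivative argument is marginally more systematic if one wanted to push toward sharper or more general statements. The equality discussion is handled cleanly in both.
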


\begin{proof}(i)
Clearly, the inequality holds with equality when either \( a \) or \( b \) is zero. We now assume both \( a \) and \( b \) are nonzero. Define the one-variable function \( f(t) := (1+t)^p - 1 - t^p \) for \( t > 0 \) with derivative \( f'(t) = p(1+t)^{p-1} - pt^{p-1} \). Since \( f'(t) < 0 \) for all \( t \in (0, +\infty) \), the function is strictly decreasing on this interval. Note that \( f(0) = 0 \), it follows that \( f(t) < 0 \) for \( t \in (0, +\infty) \).

Now, substituting \( t = a/b \) (assuming both \( a \) and \( b \) are nonzero), we get
\begin{eqnarray*}
\left(1 + \frac{a}{b}\right)^p - 1 - \left(\frac{a}{b}\right)^p < 0 \Leftrightarrow (a + b)^p < a^p + b^p,
\end{eqnarray*}
where the inequality follows directly from the fact that \( f(t) \) is strictly decreasing.
 The item (ii) can be shown similarly.
\end{proof}
From the above lemma, we immediately obtain the following result.
\begin{lemma}\label{pqineqnorm} (i) If \( p \in (0,1) \) and \( {\h a}, {\h b} \in \mathbb{R}^{n} \), then
\[
\| {\h a} + {\h b}\|^p_p  \leq \|{\h a}\|^p_p  + \|{\h b}\|^p_p,
\]
with equality holding if and only if  $\langle {\h a} ,{\h b}\rangle =0$;\\
(ii) If \( q \in (1,+\infty) \) and \( {\h a}, {\h b} \in \mathbb{R}^{n}\), then
\[
\| {\h a} + {\h b}\|^q_q  \ge\|{\h a}\|^q_q  + \|{\h b}\|^q_q,
\]
with equality holding if and only if $\langle {\h a} ,{\h b}\rangle =0$.

\end{lemma}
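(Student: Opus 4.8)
The statement to prove is Lemma~\ref{pqineqnorm}, which lifts the scalar inequalities of Lemma~\ref{pqineq} to vector $p$-norms, with a sharp equality characterization in terms of disjoint supports ($\langle \h a, \h b\rangle = 0$ for vectors with no sign cancellation — though I should be careful here, see below).

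\textbf{Proof plan for Lemma~\ref{pqineqnorm}.} The plan is to reduce the vector statement to the scalar statement of Lemma~\ref{pqineq} applied coordinatewise, together with monotonicity of $t \mapsto t^{1/p}$ (resp.\ $t \mapsto t^{1/q}$) on $[0,+\infty)$. I will carry out part~(i) in detail; part~(ii) is entirely analogous with all inequalities reversed.

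\emph{Step 1 (coordinatewise bound).} Fix $p \in (0,1)$. For each $i \in [n]$, the triangle inequality for absolute values gives $|a_i + b_i| \le |a_i| + |b_i|$. Since $t \mapsto t^p$ is nondecreasing on $[0,+\infty)$, this yields $|a_i+b_i|^p \le (|a_i|+|b_i|)^p$, and then Lemma~\ref{pqineq}(i) applied with the nonnegative scalars $|a_i|,|b_i|$ gives $(|a_i|+|b_i|)^p \le |a_i|^p + |b_i|^p$. Summing over $i$:
\[
\|\h a + \h b\|_p^p = \sum_{i=1}^n |a_i+b_i|^p \le \sum_{i=1}^n \big(|a_i|^p + |b_i|^p\big) = \|\h a\|_p^p + \|\h b\|_p^p.
\]

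\emph{Step 2 (from $p$-th powers to norms).} I want $\|\h a+\h b\|_p \le \|\h a\|_p + \|\h b\|_p$, i.e.\ after raising to the $p$-th power (legitimate since both sides are nonnegative and $t\mapsto t^p$ is increasing on $[0,\infty)$) it suffices to show $\big(\|\h a\|_p^p + \|\h b\|_p^p\big) \le \big(\|\h a\|_p + \|\h b\|_p\big)^p$. But this is exactly Lemma~\ref{pqineq}(ii) read in reverse: with $q$ replaced by $1/p \in (1,+\infty)$ and the nonnegative scalars $\|\h a\|_p^p$, $\|\h b\|_p^p$, part~(ii) gives $\big(\|\h a\|_p^p + \|\h b\|_p^p\big)^{1/p} \ge \|\h a\|_p + \|\h b\|_p$; raising both sides to the power $p$ gives the claim. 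Chaining Steps 1 and 2 proves the inequality.

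\emph{Step 3 (equality characterization).} Equality in the overall chain forces equality in every intermediate step. In Step 2, equality in Lemma~\ref{pqineq}(ii) forces $\|\h a\|_p = 0$ or $\|\h b\|_p = 0$, i.e.\ $\h a = \bzero$ or $\h b = \bzero$ — in which case $\langle \h a, \h b\rangle = 0$ trivially and equality indeed holds. Otherwise both are nonzero, so equality must hold throughout Step 1: for every $i$, both $|a_i+b_i|^p = (|a_i|+|b_i|)^p$ and $(|a_i|+|b_i|)^p = |a_i|^p+|b_i|^p$. The second (Lemma~\ref{pqineq}(i) equality case) forces $a_i = 0$ or $b_i = 0$ for each $i$, i.e.\ disjoint supports; this immediately gives $\langle \h a,\h b\rangle = \sum_i a_i b_i = 0$. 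Conversely, if $\h a,\h b$ have disjoint supports, every inequality in Steps 1--2 is an equality, so the $p$-norm inequality holds with equality. The potential subtlety — and the one place I would be most careful — is that the stated condition is $\langle \h a,\h b\rangle = 0$ rather than $\supp(\h a)\cap\supp(\h b)=\emptyset$; these are \emph{not} equivalent in general (e.g.\ $\h a = (1,1)$, $\h b=(1,-1)$). I expect the intended reading is that the vectors are componentwise sign-compatible (as they will be in the paper's applications, where entries are nonnegative or arise from $|\cdot|$), so I would either add that hypothesis, or state the equality condition as disjoint supports, or note that under the sign-compatibility that holds wherever this lemma is invoked, $\langle\h a,\h b\rangle=0$ is equivalent to disjoint supports. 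Part~(ii) follows by the identical argument with every inequality reversed and the roles of Lemma~\ref{pqineq}(i) and (ii) swapped.
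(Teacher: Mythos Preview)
Your Step~2 contains a sign error that cannot be repaired. You correctly derive, via Lemma~\ref{pqineq}(ii) with exponent $1/p>1$, that
\[
\big(\|\h a\|_p^p + \|\h b\|_p^p\big)^{1/p} \;\ge\; \|\h a\|_p + \|\h b\|_p,
\]
but raising both sides to the power $p$ (monotone increasing) then yields
\[
\|\h a\|_p^p + \|\h b\|_p^p \;\ge\; \big(\|\h a\|_p + \|\h b\|_p\big)^p,
\]
which is the \emph{opposite} of the inequality you said ``suffices''. The chain from Step~1 therefore does not close, and the argument collapses.

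The underlying reason is that the lemma, as literally stated (with $\|\cdot\|_p$ carrying the outer $1/p$-th root, per the paper's notation), is false in both parts. For~(i) take $p=1/2$, $\h a=(1,0)$, $\h b=(0,1)$: then $\|\h a+\h b\|_{1/2}=4$ while $\|\h a\|_{1/2}+\|\h b\|_{1/2}=2$. For~(ii) take $q=2$ and the same vectors: $\|\h a+\h b\|_2=\sqrt{2}<2=\|\h a\|_2+\|\h b\|_2$. The paper offers no proof either; it merely asserts that the lemma ``immediately'' follows from Lemma~\ref{pqineq}, which it does not. What \emph{does} follow from Lemma~\ref{pqineq} coordinatewise---and what is actually used later (e.g.\ the line ``$\|\h x\|_2^2+\|\h y\|_2^2\le\|\h x+\h y\|_2^2$'' in the proof of Theorem~\ref{theo0})---is the version without the outer root and, for part~(ii), under componentwise sign-compatibility:
\[
\|\h a+\h b\|_p^p \le \|\h a\|_p^p + \|\h b\|_p^p \quad (0<p<1),
\qquad
\|\h a+\h b\|_q^q \ge \|\h a\|_q^q + \|\h b\|_q^q \quad (q>1,\ a_ib_i\ge 0\ \forall i),
\]
with equality in each case exactly when $\supp(\h a)\cap\supp(\h b)=\emptyset$. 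Your Step~1 already establishes the first of these, and the second is the direct analogue; your concern about $\langle\h a,\h b\rangle=0$ versus disjoint supports is well taken and points to the same imprecision in the stated hypotheses. In short: the gap is not in your method but in the target---you were trying to prove an inequality that is false, and your Step~2 is where that impossibility surfaces.
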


We now state and show the next lemma.

\begin{lemma}\label{Ray}
Let $\h x(\neq{\bf 0})$ and ${\h e}$ be two vectors in $ \mathbb{R}^{n}$ and ${\h x}\not\in{\text{span}}({\h e})$. Define the  matrix
$$G={\h x}{\h z}^\top +{\h e}{\h x}^\top$$ and
${\h z}={\h e}+\delta {\h x}$ where $\delta(\neq 0)$ is a constant.
Then,
the matrix $G$ has two nonzero eigenvalues:
$$\lambda_{1,2}=\frac{(2{\h e}^\top {\h x}+\delta \|\h x\|^2) \pm \sqrt{\delta^2\|\h x\|^4+4(n+\delta{\h e}^\top{\h x})\|\h x\|^2}}{2},$$
and   eigenvalue $0$ with the multiplicity of $n-2$.

\end{lemma}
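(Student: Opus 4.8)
The matrix $G = \h x \h z^\top + \h e \h x^\top$ has rank at most $2$, since it is a sum of two rank-one matrices. Its column space lies in $\mathrm{span}\{\h x, \h e\}$, and because $\h x \notin \mathrm{span}(\h e)$ (and $\h x \neq \bzero$), the vectors $\h x$ and $\h e$ are linearly independent, so the column space is exactly the two-dimensional space $V := \mathrm{span}\{\h x, \h e\}$. Consequently $G$ annihilates the orthogonal complement of $\mathrm{rowspace}(G)$, which is also $(n-2)$-dimensional, giving eigenvalue $0$ with multiplicity at least $n-2$. The plan is therefore to restrict attention to the action of $G$ on the invariant subspace $V$ (note $G V \subseteq V$, so $V$ is invariant) and compute the two remaining eigenvalues from a $2\times 2$ matrix representation.

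First I would pick the basis $\{\h x, \h e\}$ of $V$ and express $G\h x$ and $G\h e$ in this basis. Using $\h z = \h e + \delta \h x$, we have $G\h x = \h x(\h z^\top \h x) + \h e(\h x^\top \h x) = (\h z^\top \h x)\,\h x + \|\h x\|^2\,\h e$, where $\h z^\top \h x = \h e^\top \h x + \delta\|\h x\|^2$. Similarly $G\h e = \h x(\h z^\top \h e) + \h e(\h x^\top \h e) = (\h z^\top\h e)\,\h x + (\h x^\top\h e)\,\h e$, where $\h z^\top \h e = \|\h e\|^2 + \delta\, \h e^\top\h x$. I would need the value $\|\h e\|^2$; the statement's formula contains the term $n$ under the square root, so presumably the lemma is being applied in a context where $\h e = \h e^{(n)}$, the all-ones vector, for which $\|\h e\|^2 = n$ — I would state this assumption explicitly (or carry $\|\h e\|^2$ symbolically and substitute $n$ at the end). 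Thus the matrix of $G|_V$ in the basis $\{\h x,\h e\}$ is
\[
M = \begin{pmatrix} \h e^\top\h x + \delta\|\h x\|^2 & n + \delta\,\h e^\top\h x \\[4pt] \|\h x\|^2 & \h e^\top\h x \end{pmatrix}.
\]

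The nonzero eigenvalues of $G$ are exactly the eigenvalues of $M$ (the change of basis is not orthonormal, but the eigenvalues of a linear operator do not depend on the choice of basis). I would compute
\[
\mathrm{tr}(M) = 2\,\h e^\top\h x + \delta\|\h x\|^2, \qquad \det(M) = (\h e^\top\h x)\big(\h e^\top\h x + \delta\|\h x\|^2\big) - \|\h x\|^2\big(n + \delta\,\h e^\top\h x\big),
\]
and then the eigenvalues are $\lambda_{1,2} = \tfrac{1}{2}\big(\mathrm{tr}(M) \pm \sqrt{\mathrm{tr}(M)^2 - 4\det(M)}\big)$. It remains to check that the discriminant simplifies to the claimed $\delta^2\|\h x\|^4 + 4(n+\delta\,\h e^\top\h x)\|\h x\|^2$: expanding $\mathrm{tr}(M)^2 = 4(\h e^\top\h x)^2 + 4\delta(\h e^\top\h x)\|\h x\|^2 + \delta^2\|\h x\|^4$ and $4\det(M) = 4(\h e^\top\h x)^2 + 4\delta(\h e^\top\h x)\|\h x\|^2 - 4n\|\h x\|^2 - 4\delta(\h e^\top\h x)\|\h x\|^2$, the cross terms and the $(\h e^\top\h x)^2$ terms cancel, leaving $\delta^2\|\h x\|^4 + 4n\|\h x\|^2 + 4\delta(\h e^\top\h x)\|\h x\|^2$, which matches. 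Finally I would note both eigenvalues are nonzero: their product is $\det(M)$, and one can check $\det(M) \neq 0$ under the hypotheses (in particular the discriminant is strictly positive since $\|\h x\|^2 > 0$ and — with $\h e = \h e^{(n)}$ — $n + \delta\,\h e^\top\h x$ can be handled, though if $\det(M)$ could vanish one should instead phrase the conclusion as "$G$ has eigenvalues $\lambda_{1,2}$ and $0$ with multiplicity $n-2$" without insisting $\lambda_{1,2}\neq 0$).

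The only genuinely delicate point is the bookkeeping: making sure the $2\times 2$ reduction is set up with the correct (non-orthonormal) basis and that eigenvalues are basis-independent, and tracking the identification $\|\h e\|^2 = n$ that the stated formula implicitly uses. The algebra of computing trace, determinant, and simplifying the discriminant is routine. I expect the main obstacle to be purely expository — stating cleanly why $0$ has multiplicity exactly $n-2$ (equivalently, why $\h x$ and $\h e$ being independent forces $\mathrm{rank}(G) = 2$, which uses $\h x \notin \mathrm{span}(\h e)$) and confirming the edge cases where $\lambda_{1,2}$ might coincide or vanish.
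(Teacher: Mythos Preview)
Your proposal is correct and follows essentially the same route as the paper: both arguments reduce $G$ to its action on the invariant two-dimensional subspace $\mathrm{span}\{\h x,\h e\}$, obtain the identical $2\times 2$ matrix (the paper writes it as $\begin{pmatrix}\h z^\top\h x & \h z^\top\h e\\ \h x^\top\h x & \h e^\top\h x\end{pmatrix}$ before substituting $\h z=\h e+\delta\h x$), and solve the resulting quadratic. Your observation that the formula implicitly uses $\|\h e\|^2=n$ is apt, and your remaining concern about whether $\lambda_{1,2}$ are genuinely nonzero can be settled by noting that $\det(M)=(\h e^\top\h x)^2-n\|\h x\|^2<0$ by the strict Cauchy--Schwarz inequality (since $\h x\notin\mathrm{span}(\h e)$), so the two eigenvalues have opposite signs and are both nonzero.
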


\begin{proof}
Suppose that ${\h y}\in[{\text{span}}({\h x},{\h e})]^\bot$.
Then,
$$ \langle {\h y}, {\h x }({\h e}+\delta{\h x})^\top +{\h e}{\h x}^\top \rangle =0.$$
It implies that ${\h y}$ is also an eigenvector of $G$ associated with $0$.
On the contrary, suppose that $\h y$ be an eigenvector of $G$ associated with $0$,
it leads to
$${\h y}^\top [ {\h x }({\h e}+\delta{\h x})^\top +{\h e}{\h x}^\top]=0.$$
By using the linear  independence of ${\h x}$ and ${\h e}$,
it further implies that
 ${\h e}^\top{\h y} =0$ and $ {\h x}^\top{\h y}=0$. Thus,
${\h y}\in[{\text{span}}({\h x},{\h e})]^\bot$.
Then, $[{\text{span}}({\h x},{\h e})]^\bot$ is the eigenvectors space
 associated $0$.
${\text{dim}}( [{\text{span}}({\h x},{\h e})]^\bot)=n-2$. The multiplicity of $0$ is $n-2$.

Let $\lambda$ be any nonzero eigenvalue of $G$ and $\h y$  be an eigenvector of $G$ associated with $\lambda$, i.e.,
\begin{eqnarray*} ({\h x}{\h z}^\top+{\h e}{\h x}^\top){\h y}=\lambda {\h y}. \end{eqnarray*}
Thus, ${\h y}=\theta_1 {\h x}+\theta_2 {\h e}$.  By substituting it into the above equation, we have that
$$  ({\h x}{\h z}^\top+{\h e}{\h x}^\top)(\theta_1 {\h x}+\theta_2 {\h e})=\lambda(\theta_1 {\h x}+\theta_2 {\h e}).$$
By comparing the coefficients of ${\h x}$ and ${\h e}$ on both sides of the above equation, it leads to
\begin{eqnarray*} \left(\begin{array}{cc} {\h z}^\top {\h x} & {\h z}^\top {\h e}\\ {\h x}^\top {\h x}&{\h e}^\top {\h x}\end{array}\right)\left(\begin{array}{c}\theta_1 \\ \theta_2 \end{array}\right)=\lambda\left(\begin{array}{c}\theta_1\\ \theta_2\end{array} \right). \end{eqnarray*}
It implies that $\lambda$ is also the eigenvalues of the left $2\times 2$ matrix.
With some elementary calculations, we have two nonzero eigenvalues are
\begin{eqnarray*} \lambda_{1,2}=\frac{({\h z}^\top {\h x}+{\h e}^\top {\h x})\pm \sqrt{({\h z}^\top {\h x}-{\h e}^\top {\h x})^2+4({\h z}^\top {\h e})({\h x}^\top {\h x})}}{2}. \end{eqnarray*}
By invoking ${\h z}={\h e}+\delta {\h x}$, the assertion follows directly.
\end{proof}

For the convenience of later analysis, we provide the gradient and Hessian matrix of a ratio function as follows. Let \( q: \mathbb{R}^s / \mathbb{R}^s_+ \to \overline{\mathbb{R}} := \mathbb{R} \cup \{+\infty\} \) and \( p: \mathbb{R}^s / \mathbb{R}^s_+ \to \mathbb{R} \setminus \{0\} \), where both \( q(\cdot) \) and \( p(\cdot) \in \mathcal{C}^2 \). We consider the ratio of \( q \) and \( p \) over \( \mathbb{R}^s / \mathbb{R}^s_+ \):

\[
R(\mathbf{u}) = \frac{q(\mathbf{u})}{p(\mathbf{u})}.
\]
The gradient of \( R(\cdot) \) is given by

\begin{align*}
    \nabla R(\mathbf{u}) &= \frac{p(\mathbf{u}) \nabla q(\mathbf{u}) - q(\mathbf{u}) \nabla p(\mathbf{u})}{p^2(\mathbf{u})} \nn\\
    &= \frac{\nabla q(\mathbf{u}) - R(\mathbf{u}) \nabla p(\mathbf{u})}{p(\mathbf{u})}.
\end{align*}
Differentiating again yields the Hessian of \( R(\cdot) \) as follows:
\begin{align}\label{HessR}
\nabla^2 R(\mathbf{u}) = & \frac{\nabla^2 q(\mathbf{u})}{p(\mathbf{u})} - \frac{1}{p^2(\mathbf{u})} \left( \nabla p(\mathbf{u}) \nabla q(\mathbf{u})^\top\right.\nn\\
 &  \left.+\nabla q(\mathbf{u}) \nabla p(\mathbf{u})^\top+  \nabla^2 p(\mathbf{u}) q(\mathbf{u}) \right) \nn\\
&+ 2 \frac{q(\mathbf{u})}{p^3(\mathbf{u})} \nabla p(\mathbf{u}) \nabla p(\mathbf{u})^\top.
\end{align}
Finally, we review an important property of Kronecker products: given three matrices $A_c\in\mathbb R^{m\times n}$, $A_r\in\mathbb R^{p\times s}$ and $\h X\in{\mathbb R}^{n\times s}$, it holds that
\begin{eqnarray*}(A_r\otimes A_c){\text{vec}}(\h X)={\text{vec}}(A_c {\h X} A_r^\top).\end{eqnarray*}

\section{Bound theory for  $L_1/L_2$ minimization} \label{wellDef}
First, we  provide  unified upper bounds in terms of $L_2$-norm for any local minimizer of (\ref{L1o2Con}) and (\ref{L1o2uncon}), respectively.
Second, we derive the upper/lower bound of nonzero entries of any local minimizer of
 (\ref{L1o2uncon}).

%Thus, each diagonal element of $(A_{\Lambda})^\top A_{\Lambda}$  is positive.
\subsection{Uniform upper bound for any local minimizer}
Below, we present the results on the $L_2$-norm upper bounds for any local minimizer of the model (\ref{L1o2Con}).
%Obviously, $(A_{\Lambda})^\top A_{\Lambda}\succ 0$ ($\Lambda:={\text{supp}}(\h x)$)  since ${\h x}\in{\cal L}_{u}^*/{\cal L}_{c}^*$ \cite[Theorem 5]{TaoZhang23}/\cite[Lemma 2]{TaoZhang23}.

\begin{theorem} \label{LocmiCon} Consider the constrained model (\ref{L1o2Con}).
For any ${\h x}\in{\cal L}_{c}^*$,  $\|\h x\|\le \frac{\|\h b\|}{\sqrt{\sigma}}$ where where $\sigma$ denotes the smallest nonzero eigenvalue of $A^\top A$.
\end{theorem}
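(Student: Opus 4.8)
The plan is to split the bound into a trivial Rayleigh-quotient estimate for the part of $\h x$ lying in the row space of $A$, plus a structural consequence of local optimality that removes the remaining degrees of freedom. To begin, decompose any feasible $\h x$ as $\h x = \h x_R + \h x_N$ with $\h x_R \in \mathrm{Range}(A^\top)$ and $\h x_N \in \ker A$. Since $A\h x_N = {\bf 0}$, we have $A\h x_R = \h b$, and because $\h x_R$ is orthogonal to $\ker(A^\top A) = \ker A$, the Courant--Fischer inequality gives $\|\h b\|^2 = \h x_R^\top (A^\top A)\h x_R \ge \sigma\|\h x_R\|^2$, i.e. $\|\h x_R\| \le \|\h b\|/\sqrt{\sigma}$. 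Hence the whole point is to show that a \emph{local minimizer} has no slack in the feasible directions beyond $\h x_R$ --- equivalently, that on its own support it is a point with a trivial tangent space.

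I would make this precise by reducing to the support. Set $\Lambda := \supp(\h x)$. A routine perturbation argument shows that if $\h x \in \mathcal L_c^*$, then the restriction $\h x_\Lambda$ is a local minimizer of the reduced problem $\min\{\|\h y\|_1/\|\h y\|_2 : A_\Lambda\h y = \h b\}$ (with $\h y$ indexed by $\Lambda$, and nonnegative when $\mathcal X = \mathbb R^n_+$): any feasible move activating a coordinate outside $\Lambda$ adds a first-order term proportional to the $L_1$-norm of its off-support part to the numerator, but only a second-order term to the denominator, so it strictly increases the ratio. On the open orthant through $\h x_\Lambda$ the numerator equals $\langle\sign(\h x_\Lambda),\h y\rangle$, so $g(\h y) := \|\h y\|_1/\|\h y\|_2$ is $\mathcal C^2$ near $\h x_\Lambda$, and I would write the first- and second-order optimality conditions for the equality-constrained reduced problem: $\nabla g(\h x_\Lambda) \perp \ker A_\Lambda$ and $\h w^\top\nabla^2 g(\h x_\Lambda)\h w \ge 0$ for all $\h w \in \ker A_\Lambda$. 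Computing $\nabla g$ and $\nabla^2 g$ from the ratio formulas --- the Hessian being (\ref{HessR}) with $q(\h y) = \langle\sign(\h x_\Lambda),\h y\rangle$ and $p(\h y) = \|\h y\|_2$ --- and using Lemma~\ref{Ray} to read off the spectrum of the rank-two block $\h x_\Lambda\h s^\top + \h s\h x_\Lambda^\top$ of the Hessian (with $\h s = \sign(\h x_\Lambda)$), the first-order identity $\langle\h s,\h w\rangle = \tfrac{\|\h x_\Lambda\|_1}{\|\h x_\Lambda\|^2}\langle\h x_\Lambda,\h w\rangle$ collapses the second-order inequality to $\langle\h x_\Lambda,\h w\rangle^2 \ge \|\h x_\Lambda\|^2\|\h w\|^2$ for every $\h w \in \ker A_\Lambda$. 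By Cauchy--Schwarz this forces $\h w \in \mathrm{span}(\h x_\Lambda)$; but $A_\Lambda\h x_\Lambda = \h b \ne {\bf 0}$ means $\h x_\Lambda \notin \ker A_\Lambda$, so $\ker A_\Lambda = \{{\bf 0}\}$, i.e. $A_\Lambda$ has full column rank.

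Once $A_\Lambda$ has full column rank, $\h x_\Lambda$ is the \emph{unique} solution of $A_\Lambda\h y = \h b$, and the Rayleigh-quotient estimate of the first step --- applied now to the positive-definite Gram matrix $A_\Lambda^\top A_\Lambda$ --- gives $\|\h x\| = \|\h x_\Lambda\| \le \|\h b\|/\sqrt{\sigma}$, which is the asserted bound. I expect the two real obstacles to be: (i) making the reduction to $\Lambda$ watertight despite the nonsmooth $L_1$-numerator --- a careful accounting of one-sided directional derivatives and a genuine verification that activating off-support coordinates can never decrease the ratio; and (ii) the second-order step, where Lemma~\ref{Ray} must be invoked with exactly the right parameters and the sign of the reduced inequality must come out correctly, since this is the only place the local (rather than merely stationary) nature of $\h x$ is actually used.
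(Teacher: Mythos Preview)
Your route differs from the paper's in one substantive way: the paper obtains the key structural fact --- that $A_\Lambda$ has full column rank at any local minimizer --- by simply citing \cite[Lemma~2]{TaoZhang23}, whereas you rederive it from the second-order necessary conditions on the reduced problem. Your derivation is correct: writing out $\nabla^2 g$ and substituting the first-order identity $\langle\h s,\h w\rangle=\tfrac{a}{r^2}\langle\h x_\Lambda,\h w\rangle$ collapses the second-order condition to $\langle\h x_\Lambda,\h w\rangle^2\ge\|\h x_\Lambda\|^2\|\h w\|^2$ for every $\h w\in\ker A_\Lambda$, and Cauchy--Schwarz together with $A_\Lambda\h x_\Lambda=\h b\ne\bzero$ forces $\ker A_\Lambda=\{\bzero\}$. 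This buys a self-contained argument at the price of length; the paper's one-line citation is shorter but opaque. Several of your intermediate steps are superfluous: the opening row-space/kernel decomposition is never used once you pass to $\Lambda$; the ``activating off-support coordinates'' discussion argues the converse direction (you only need that restricting to $\Lambda$ preserves local optimality, which is immediate since you are shrinking the admissible perturbations); and Lemma~\ref{Ray} is overkill, since the scalar $\h w^\top\nabla^2 g\,\h w$ can be read off directly without any eigenvalue computation.

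There is, however, a genuine gap in your final step --- and the paper's proof makes exactly the same leap. After establishing $A_\Lambda^\top A_\Lambda\succ\bzero$, both you and the paper conclude $\|\h x_\Lambda\|\le\|\h b\|/\sqrt{\sigma}$ by implicitly using $\sigma_{\min}(A_\Lambda^\top A_\Lambda)\ge\sigma$. This is not a consequence of interlacing and fails in general: for $A=(1,\,1)$, $\h b=1$ one has $\sigma=2$, yet the global minimizer $\h x=(1,0)^\top$ gives $\Lambda=\{1\}$, $A_\Lambda^\top A_\Lambda=1<2$, and $\|\h x\|=1>1/\sqrt{2}=\|\h b\|/\sqrt{\sigma}$. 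Your ``Rayleigh-quotient estimate of the first step'' does not transfer to $A_\Lambda$, because the zero-padded $\h x_\Lambda$ need not lie in $\mathrm{Range}(A^\top)$. So the statement as written requires an additional hypothesis on $A$ (for instance, unit-norm columns would force $\sigma\le1$ and the inequality would hold), and neither your argument nor the paper's supplies it.
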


\begin{proof} Let ${\h x}\in{\cal L}_{c}^*$ and  $\Lambda ={\text{supp}}(\h x)$, then $(A_{\Lambda})^\top A_{\Lambda}\succ 0$ due to
\cite[Lemma 2]{TaoZhang23}. Thus
\begin{eqnarray*}\sqrt{\sigma_{\min}(A_{\Lambda}^\top A_{\Lambda})}\|{\h x}_{\Lambda}\|\le\|A_{\Lambda}{\h x}_{\Lambda}\|=\|{\h b}\|, \end{eqnarray*}
where  $\sigma_{\min}(\cdot)$ denotes the minimum nonzero eigenvalue.
Consequently, $\|\h x\|\le \frac{\|\h b\|}{\sqrt{\sigma}}$
 because $\sigma_{\min}(A_{\Lambda}^\top A_{\Lambda})\ge \sigma$.
\end{proof}
\medskip
Next, we present two different approaches to derive the $L_2$-norm upper bounds for any local minimizer of  (\ref{L1o2uncon}).

\begin{theorem}\label{Locmi}Consider the unconstrained model (\ref{L1o2uncon}).
\begin{itemize}
\item[(i)] For any ${\h x}\in{\cal L}_{u}^*$ and $\Lambda ={\text{supp}}(\h x)$, the following holds:
 \begin{eqnarray*}\|{\h x}\|\le\displaystyle{ \frac{\sigma^{-1}\|A^\top {\h b}\|+\sqrt{(\sigma^{-1}\|A^\top {\h b}\|)^2+4\sigma^{-1}\gamma\sqrt{n}}}{2},}\end{eqnarray*}
    where $\sigma$ denotes the smallest nonzero eigenvalue of $A^\top A$;
    \item[(ii)] For any ${\h x}\in{\cal L}_{u}^*$,  $\|\h x\|\le \frac{\|\h b\|}{\sqrt{\sigma}}$ where $\sigma$ denotes the smallest nonzero eigenvalue of $A^\top A$.
   \end{itemize}
    \end{theorem}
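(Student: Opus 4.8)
\textbf{Proof proposal for Theorem~\ref{Locmi}.}

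\medskip

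The plan is to treat the two bounds by the two different mechanisms hinted at in the statement. For part (ii), I would simply reuse the argument of Theorem~\ref{LocmiCon} verbatim: if $\h x\in\mathcal L_u^*$ and $\Lambda=\supp(\h x)$, then restricting the unconstrained objective to the coordinate subspace indexed by $\Lambda$ shows $\h x_\Lambda$ is a local minimizer of $\gamma\|\h x_\Lambda\|_1/\|\h x_\Lambda\|_2+\tfrac12\|A_\Lambda\h x_\Lambda-\h b\|^2$; invoking \cite[Lemma~2]{TaoZhang23} again gives $A_\Lambda^\top A_\Lambda\succ 0$. The key extra observation is that the quadratic residual cannot be too large at a minimizer: comparing the objective value at $\h x$ with its value at the least-squares point on the same support (or simply using that $\h x$ beats $\h x=\bzero$ is not allowed since $\|\cdot\|_1/\|\cdot\|_2$ is undefined at $\bzero$, so instead compare with a scaled copy $t\h x$ and let $t\to\infty$ — the ratio term is scale-invariant, so feasibility of large $t$ forces $A\h x=\h b$ to be ``nearly'' consistent). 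More cleanly: the stationarity/optimality of $\h x$ on its support implies $A_\Lambda^\top(A_\Lambda\h x_\Lambda-\h b)$ is balanced against the (bounded, since scale-invariant) subgradient of the ratio; but the cleanest route to exactly $\|\h x\|\le\|\h b\|/\sqrt\sigma$ is to note that along the ray $t\mapsto t\h x$ the ratio term is constant, so $g(t):=\tfrac12\|tA\h x-\h b\|^2$ must attain its minimum over $t>0$ at $t=1$ (otherwise we could decrease the objective), giving $\langle A\h x-\h b, A\h x\rangle=0$, hence $\|A\h x\|^2=\langle A\h x,\h b\rangle\le\|A\h x\|\|\h b\|$, so $\|A_\Lambda\h x_\Lambda\|\le\|\h b\|$ and then $\sqrt\sigma\,\|\h x\|\le\sqrt{\sigma_{\min}(A_\Lambda^\top A_\Lambda)}\,\|\h x_\Lambda\|\le\|A_\Lambda\h x_\Lambda\|\le\|\h b\|$.

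\medskip

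For part (i), I would use a genuine first-order (stationarity) argument rather than the ray trick. At a local minimizer $\h x$ with support $\Lambda$, the ratio $\|\cdot\|_1/\|\cdot\|_2$ is differentiable on the open set $\{\h x:\supp(\h x)=\Lambda\}$, with gradient (restricted to $\Lambda$) equal to $\frac{\sign(\h x_\Lambda)}{\|\h x\|}-\frac{\|\h x\|_1}{\|\h x\|^3}\h x_\Lambda$. Setting the full gradient of (\ref{L1o2uncon}) to zero on $\Lambda$ yields
\begin{eqnarray*}
\gamma\left(\frac{\sign(\h x_\Lambda)}{\|\h x\|}-\frac{\|\h x\|_1}{\|\h x\|^3}\h x_\Lambda\right)+A_\Lambda^\top(A_\Lambda\h x_\Lambda-\h b)=\bzero.
\end{eqnarray*}
Taking the inner product with $\h x_\Lambda$ and using $\langle\sign(\h x_\Lambda),\h x_\Lambda\rangle=\|\h x\|_1$ makes the whole bracketed ratio-term contribution vanish, leaving $\langle A_\Lambda^\top(A_\Lambda\h x_\Lambda-\h b),\h x_\Lambda\rangle=0$ — which again gives $\|A_\Lambda\h x_\Lambda\|\le\|\h b\|$ and is essentially (ii). To instead get the bound of (i), I would rearrange the stationarity equation as $A_\Lambda^\top A_\Lambda\h x_\Lambda=A_\Lambda^\top\h b-\gamma\nabla R(\h x)$, take norms, bound $\|\nabla R(\h x)\|\le\|\sign(\h x_\Lambda)\|/\|\h x\|+\|\h x_\Lambda\|\cdot\|\h x\|_1/\|\h x\|^3\le \sqrt n/\|\h x\|+\sqrt n/\|\h x\|=2\sqrt n/\|\h x\|$ (using $\|\h x\|_1\le\sqrt n\|\h x\|$ and $\|\sign(\h x_\Lambda)\|=\sqrt{\sharp\Lambda}\le\sqrt n$), and use $\|A_\Lambda^\top A_\Lambda\h x_\Lambda\|\ge\sigma\|\h x_\Lambda\|=\sigma\|\h x\|$ on the range of $A_\Lambda^\top$. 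This produces $\sigma\|\h x\|\le\|A^\top\h b\|+2\gamma\sqrt n/\|\h x\|$, i.e. $\sigma\|\h x\|^2-\|A^\top\h b\|\,\|\h x\|-2\gamma\sqrt n\le 0$; the positive root of this quadratic is exactly the stated bound $\tfrac12\big(\sigma^{-1}\|A^\top\h b\|+\sqrt{\sigma^{-2}\|A^\top\h b\|^2+4\sigma^{-1}\gamma\sqrt n\cdot 2}\big)$ — I should double-check whether the paper's $4\sigma^{-1}\gamma\sqrt n$ (no factor $2$) means they bound $\|\nabla R\|\le\sqrt n/\|\h x\|$ by a sharper estimate (e.g. orthogonality $\langle\sign(\h x_\Lambda),\h x_\Lambda\rangle$ lets one replace $\|\sign(\h x_\Lambda)-\tfrac{\|\h x\|_1}{\|\h x\|^2}\h x_\Lambda\|^2=\sharp\Lambda-\|\h x\|_1^2/\|\h x\|^2\le\sharp\Lambda\le n$, giving $\|\nabla R\|\le\sqrt n/\|\h x\|$ directly), which would indeed give the cleaner constant.

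\medskip

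The main obstacle I anticipate is justifying the first-order conditions cleanly: the ratio objective is nonsmooth, but the crucial point is that at a local minimizer $\h x\ne\bzero$ the restriction to $\{\h z:\supp(\h z)\subseteq\Lambda\}$ is \emph{smooth} near $\h x$ (no entry of $\h x_\Lambda$ vanishes by definition of $\Lambda$, and $\|\h z\|_2$ is bounded away from $0$), so ordinary calculus applies on that subspace — there is no subgradient subtlety once we pass to the support. The second minor point is that for part (ii) via the ray argument one must verify that $t\h x$ stays in $\mathcal X$ for $t>0$, which holds for both $\mathcal X=\mathbb R^n$ and $\mathcal X=\mathbb R^n_+$. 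Everything else is the elementary norm inequality $\|\h x_1\|\le\sqrt n\|\h x\|$ and the eigenvalue bound $\sigma_{\min}(A_\Lambda^\top A_\Lambda)\ge\sigma$ already used in Theorem~\ref{LocmiCon}. I would present (i) first with the stationarity computation and (ii) second with the short scale-invariance observation, noting that (ii) is the tighter of the two whenever $\|\h b\|^2\le\sigma$ times (the (i)-bound)$^2$, so the two are genuinely complementary.
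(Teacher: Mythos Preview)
Your proposal is correct and, for part (i), matches the paper's argument exactly: restrict to the support $\Lambda$, write the first-order stationarity condition, solve for $A_\Lambda^\top A_\Lambda\h x_\Lambda$, and bound the gradient of the ratio term by the Pythagorean computation $\|\sign(\h y)/r-(a/r^3)\h y\|^2=(s-a^2/r^2)/r^2\le s/r^2\le n/r^2$, which gives precisely the quadratic $r^2-\sigma^{-1}\|A^\top\h b\|\,r-\sigma^{-1}\gamma\sqrt n\le 0$ and hence the stated bound. Your self-correction about the constant (no factor~$2$) is exactly right and is how the paper obtains it.

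For part (ii) there is a small but genuine difference worth noting. The paper does \emph{not} use the ray/scale-invariance trick; instead it takes the same first-order condition from (i) and simply multiplies by $\h x_\Lambda^\top$, using $\h x_\Lambda^\top\big(\sign(\h x_\Lambda)/r-(a/r^3)\h x_\Lambda\big)=0$ to kill the ratio term and obtain $\langle A_\Lambda\h x_\Lambda, A_\Lambda\h x_\Lambda-\h b\rangle=0$ directly. Your ray argument (the ratio is scale-invariant, so $t\mapsto\tfrac12\|tA\h x-\h b\|^2$ is stationary at $t=1$) yields the identical orthogonality relation by a more geometric route and is arguably cleaner, since it requires neither the explicit form of $\nabla R$ nor the positive-definiteness of $A_\Lambda^\top A_\Lambda$ from \cite[Lemma~2]{TaoZhang23} until the very last step. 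Either way the conclusion $\|A_\Lambda\h x_\Lambda\|\le\|\h b\|$ and then $\|\h x\|\le\|\h b\|/\sqrt\sigma$ is the same.
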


    \begin{proof} (i) Let ${\h x}\in{\cal L}_{u}^*$. Denote $a=\|{\h x}\|_1$ and $r=\|\h x\|_2$.
We denote ${\Lambda}={\text{supp}}({\h x})$ and $s=\sharp(\Lambda)$.
We can show that ${\h y}={\h x}_{\Lambda}$ is also a local minimizer of
\begin{eqnarray}\label{restopt}\min_{{\h y}\in {\cal X}^s} { f}(\h y)=\gamma \frac{\|{\h y}\|_{1}}{\|{\h y}\|_{2}}+\frac{1}{2}\|A_{\Lambda}{\h y}-{\h b}\|^2,\end{eqnarray}
where ${\cal X}^s={\mathbb R}^s/{\mathbb R}^s_+$.
From the first-order optimality condition, one has
\begin{eqnarray*}\gamma\left(\frac{{ \sign(\h y)}}{r}-\frac{a}{r^3}{\h y}\right)+(A_{\Lambda})^\top(A_{\Lambda}{\h y}-{\h b})=0, \end{eqnarray*}
both for ${\cal X}^s={\mathbb R}^s$ and ${\cal X}^s={\mathbb R}^s_+$.
Thus
\begin{eqnarray*} {\h y}= ((A_{\Lambda})^\top A_{\Lambda})^{-1}\left[ (A_{\Lambda})^\top{\h b}-\gamma\left(\frac{\sign(\h y)}{r}-\frac{a}{r^3}{\h y}\right)\right]. \end{eqnarray*}
Taking $L_2$-norm on both sides of the above inequality and then using Cauchy-Schwarz inequality,
we have
\begin{eqnarray}\label{rupbound1} r\le{\sigma_{\min}(A_{\Lambda}^\top A_{\Lambda})}^{-1}\left[\| (A_{\Lambda})^\top{\h b}\|+{\gamma}\left\|\frac{\sign(\h y)}{r}-\frac{a}{r^3}{\h y} \right\| \right]. \end{eqnarray}
%where ${\tilde \sigma}$ denotes the smallest nonzero eigenvalue of $(A_{\Lambda})^\top A_{\Lambda}$ which is positive.
Next, we calculate the last term in the above inequality,
\begin{eqnarray*}&&\left\|\frac{\sign(\h y)}{r}-\frac{a}{r^3}{\h y} \right\| =\frac{1}{r^3}\|{\bf e}r^2-a|\h y| \|\nn\\
                        &&=\frac{1}{r^3}\sqrt{r^4 s - r^2 a^2}\le \frac{\sqrt{s}}{r}.\end{eqnarray*}
Note that
$\|(A_{\Lambda})^\top {\h b}\|\le \|A^\top {\h b}\|$.
In addition, we have that $\sigma_{\min}(A_{\Lambda}^\top A_{\Lambda})\ge \sigma$ since the columns of $A_{\Lambda}$ is a subset of
the columns of $A$. By substituting these three inequalities into (\ref{rupbound1}),  we have that
\begin{eqnarray*} r\le{\sigma}^{-1}\left[\| A^\top{\h b}\|+\frac{\gamma}{r}\sqrt{n} \right]. \end{eqnarray*}
It amounts to solving a quadratic inequality  of a single variable $r$, i.e.,
$$ r^2-\sigma^{-1}\|A^\top {\h b}\| r-\sigma^{-1}\gamma\sqrt{n}\le 0.$$
Thus the assertion (i) follows directly.

(ii) For both cases of ${\cal X}={\mathbb R}^n/{\mathbb R}^n_+$, it follows from the first-order optimality condition that \begin{eqnarray*} \gamma\left(\frac{{\text{sign}}({\h x}_{\Lambda})}{r}-\frac{a}{r^3}{\h x}_{\Lambda}\right)+A_{\Lambda}^\top(A_{\Lambda}{\h x}_{\Lambda}-{\h b})=0. \end{eqnarray*}
By multiplying the above equation with ${\h x}_{\Lambda}^\top$ and
using ${\h x}_{\Lambda}^\top(\frac{{\text{sign}}({\h x}_{\Lambda})}{r}-\frac{a}{r^3}{\h x}_{\Lambda})=0$, we see that
\begin{eqnarray*} {\h x}_{\Lambda}^\top (A_{\Lambda}^\top A_{\Lambda}{\h x}_{\Lambda}-A_{\Lambda}^\top{\h b})=0\Rightarrow \|A_{\Lambda}{\h x}_{\Lambda}\|^2\le \|A_{\Lambda}{\h x}_{\Lambda}\|\|{\h b}\|. \end{eqnarray*}
Consequently, $\|\h x\|\le \frac{\|\h b\|}{\sqrt{\sigma}}$
 because $\sigma_{\min}(A_{\Lambda}^\top A_{\Lambda})\ge \sigma$.
\end{proof}

\begin{remark}
Theorems \ref{LocmiCon} and \ref{Locmi} present respectively a closed ball that contains all local minimizers for problems (\ref{L1o2Con}) and (\ref{L1o2uncon}). These two theorems indicate that the boundedness assumption for establishing the global convergence of the primal sequence generated by  the SGPM \cite{ELX13, YEX14}, the accelerated schemes in \cite{WYYL20}, ADMM$_p$ \cite{Tao20}, ADMM$_p^+$ \cite{TaoZhang23}, and the two-phase heuristic acceleration algorithm in \cite{10542092} is reasonable. These bounds can be computed in advance and imposed beforehand.
\end{remark}

\subsection{Upper/lower bounds of nonzero entries of local minimizers}
We now present the upper/lower bounds for the nonzero entries of the local solution to the model (\ref{L1o2uncon}).
    \begin{theorem}
    \label{bounduncon}
    Consider the unconstrained model (\ref{L1o2uncon}) with ${\cal X}={\mathbb R}^n/{\mathbb R}^n_+$.
For any ${\h x}\in{\cal L}_{u}^*$, let $a=\|\h x\|_1$, $r=\|\h x\|_2$ and $\delta_{i}={\h e}_i^\top (A_{\Lambda}^\top A_{\Lambda}) {\h e}_i$ $(i\in\Lambda)$  with ${\tilde\delta_i} =\delta_i/\gamma$.
Define
\begin{eqnarray*}\displaystyle{\Delta_i := \frac{4}{r^6}+\frac{12a^2}{r^8}-\frac{12 a{\tilde \delta_i}}{r^5}} \end{eqnarray*}
and
\begin{eqnarray*}&&\displaystyle{\kappa_{1,i}:=\frac{\frac{2}{r^3}-\sqrt{\frac{4}{r^6}+\frac{12a^2}{r^8}-\frac{12 a{\tilde \delta_i}}{r^5}}}{\frac{6a}{r^5}}}, \nn\\[0.2cm]
&&\displaystyle{\kappa_{2,i}:=\frac{\frac{2}{r^3}+\sqrt{\frac{4}{r^6}+\frac{12a^2}{r^8}-\frac{12 a{\tilde \delta_i}}{r^5}}}{\frac{6a}{r^5}}}.\end{eqnarray*}
\begin{itemize}
\item[(i)]  For each $i\in[n]$, if ${\tilde \delta_i} <\frac{a}{r^3}+\frac{1}{3ar}$, then $\Delta_i>0$;
\item[(ii)] For each $i\in[n]$, if $\frac{a}{r^3}\ge{\tilde \delta}_i$, then $|x_i|\ge {\kappa_{2,i}}$;
\item[(iii)]For each $i\in[n]$, if $\frac{a}{r^3}<{\tilde \delta_i} <\frac{a}{r^3}+\frac{1}{3ar}$,
then
$|x_i|\le \kappa_{1,i}$ or $|x_i|\ge \kappa_{2,i}$.
\end{itemize}
\end{theorem}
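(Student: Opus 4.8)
The plan is to pass to the support, invoke the second-order necessary optimality condition for the resulting smooth problem, and then test that matrix inequality against the single coordinate direction $\h e_i$; the three claims then drop out of elementary properties of a quadratic in $|x_i|$.

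\textbf{Step 1: reduction to the support.} Fix $\h x\in\mathcal L_u^*$ and set $\Lambda=\supp(\h x)$, $s=\sharp(\Lambda)$, $\h y=\h x_\Lambda$, $a=\|\h x\|_1=\|\h y\|_1$, $r=\|\h x\|_2=\|\h y\|_2$. Exactly as in the proof of Theorem~\ref{Locmi}(i), $\h y$ is a local minimizer of the reduced problem (\ref{restopt}) over $\mathcal X^s=\mathbb R^s/\mathbb R^s_+$. Since $\h y$ has no zero coordinate and $\h y\neq\bzero$, the reduced objective $f$ is $\mathcal C^2$ in a neighborhood of $\h y$ (each $|y_i|$ is smooth there, as is $\|\h y\|_2>0$); moreover, when $\mathcal X^s=\mathbb R^s_+$ every coordinate of $\h y$ is strictly positive, so $\h y$ lies in the interior of $\mathbb R^s_+$. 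In both cases $\h y$ is an \emph{unconstrained} $\mathcal C^2$ local minimizer, whence the second-order necessary condition $\nabla^2 f(\h y)\succeq\bzero$ holds.

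\textbf{Step 2: the coordinate test.} Applying the Hessian formula (\ref{HessR}) with $q(\h u)=\|\h u\|_1$, $p(\h u)=\|\h u\|_2$ at $\h u=\h y$ (using $\nabla q=\sign(\h y)$, $\nabla^2 q=\bzero$, $\nabla p=\h y/r$, $\nabla^2 p=\frac1r I-\frac1{r^3}\h y\h y^\top$, $q=a$, $p=r$) gives
\[
\nabla^2\Bigl(\tfrac{\|\cdot\|_1}{\|\cdot\|_2}\Bigr)(\h y)=-\frac{\h y\,\sign(\h y)^\top+\sign(\h y)\,\h y^\top}{r^3}-\frac{a}{r^3}I+\frac{3a}{r^5}\h y\h y^\top .
\]
Testing $\nabla^2 f(\h y)=\gamma\nabla^2(\|\cdot\|_1/\|\cdot\|_2)(\h y)+A_\Lambda^\top A_\Lambda\succeq\bzero$ against $\h v=\h e_i$ for $i\in\Lambda$, and using $\h e_i^\top(A_\Lambda^\top A_\Lambda)\h e_i=\delta_i$ together with $y_i\sign(y_i)=|x_i|$ and $y_i^2=|x_i|^2$, yields after division by $\gamma>0$
\[
g_i(|x_i|)\ \ge\ 0,\qquad g_i(t):=\frac{3a}{r^5}\,t^2-\frac{2}{r^3}\,t+\Bigl(\tilde\delta_i-\frac{a}{r^3}\Bigr).
\]
Hence $g_i$ is an upward-opening parabola whose discriminant equals $\Delta_i$, whose roots are $\kappa_{1,i}\le\kappa_{2,i}$, whose sum of roots is $\frac{2r^2}{3a}>0$, and whose constant term satisfies $g_i(0)=\tilde\delta_i-\frac a{r^3}$.

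\textbf{Step 3: reading off (i)--(iii).} Claim (i) is the direct equivalence $\Delta_i>0\iff\tilde\delta_i<\frac a{r^3}+\frac1{3ar}$, obtained by clearing the common factor $\frac{12a}{r^5}$. For (ii), $\frac a{r^3}\ge\tilde\delta_i$ implies in particular $\tilde\delta_i<\frac a{r^3}+\frac1{3ar}$, so $\Delta_i>0$ and $\kappa_{1,i}<\kappa_{2,i}$; since $g_i(0)=\tilde\delta_i-\frac a{r^3}\le 0$ and the parabola opens upward, $\kappa_{1,i}\le 0<\kappa_{2,i}$, and because $|x_i|>0$ while $g_i(|x_i|)\ge 0$ forces $|x_i|\notin(\kappa_{1,i},\kappa_{2,i})$, we conclude $|x_i|\ge\kappa_{2,i}$. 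For (iii), $\frac a{r^3}<\tilde\delta_i<\frac a{r^3}+\frac1{3ar}$ gives $\Delta_i>0$ and $g_i(0)>0$; since the sum of roots is positive and $g_i(0)>0$, both roots are strictly positive, and $g_i(|x_i|)\ge0$ with $|x_i|>0$ leaves exactly the two possibilities $|x_i|\le\kappa_{1,i}$ or $|x_i|\ge\kappa_{2,i}$.

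\textbf{Main obstacle.} The one delicate point is Step 1: the textbook second-order necessary condition applies at an \emph{unconstrained smooth} local minimizer, so I must legitimately restrict to the support (recovering smoothness of $\|\cdot\|_1$ and of the ratio, which uses $\h y\neq\bzero$) and, in the nonnegative-orthant case, argue that the sign constraints on $\Lambda$ are inactive so that $\h y$ is an interior minimizer. Once $\nabla^2 f(\h y)\succeq\bzero$ is secured, the Hessian evaluation via (\ref{HessR}) and the subsequent analysis of the scalar quadratic $g_i$ are routine.
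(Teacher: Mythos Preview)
Your proposal is correct and follows essentially the same approach as the paper: restrict to the support, invoke the second-order necessary condition $\nabla^2 f(\h y)\succeq\bzero$, compute the Hessian via (\ref{HessR}), test against the coordinate vector $\h e_i$, and analyze the resulting univariate quadratic in $|x_i|$. The paper additionally computes the full spectrum of the rank-two perturbation $Q$ via Lemma~\ref{Ray}, but that calculation is not actually used to derive the coordinatewise inequality---only the diagonal test $\h e_i^\top\nabla^2 f(\h y)\h e_i\ge 0$ is---so your streamlined version loses nothing.
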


\begin{proof}
The following proof applies to both ${\cal X}^s = {\mathbb R}^s$ and ${\cal X}^s = {\mathbb R}^s_+$. Therefore, we present it in a unified manner.
Using the formula  (\ref{HessR}), for ${\h x} \in {\cal L}_u^*$ the generalized Hessian of (\ref{restopt}) at  ${\h y} = {\h x}_\Lambda$ is given as
\begin{align}\label{LocCond}
\nabla^2 f({\h y}) = & [\nabla^2 F({\h x})]_{\Lambda, \Lambda} \nn\\
= & (A_\Lambda)^\top A_\Lambda - \gamma \frac{1}{r^2} \left( \frac{{\h y} (\text{sign}({\h y}))^\top + \text{sign}({\h y}) {\h y}^\top}{r} \right)\nn\\
&  + \gamma \frac{3a {\h y} {\h y}^\top}{r^5} - \gamma \frac{a}{r^3} \succeq \mathbf{0}.
\end{align}
Consequently,
\begin{align*}
\nabla^2 f({\h y})
= & (A_\Lambda)^\top A_\Lambda - \gamma \frac{1}{r^2} \left( \frac{|{\h y}| {\h e}^\top + {\h e} |{\h y}|^\top}{r} \right)\nn\\
 &~ + \gamma \frac{3a |{\h y}| |{\h y}|^\top}{r^5} - \gamma \frac{a}{r^3}.
\end{align*}
Next, we compute all of the eigenvalues of $$Q = \frac{\gamma}{r^3}\left[ \left( |{\h y}| {\h e}^\top + {\h e} |{\h y}|^\top \right) - \frac{3a |{\h y}| |{\h y}|^\top}{r^2} + aI\right ].$$ Define $${\tilde Q} = |{\h y}| {\h e}^\top + ({\h e} + \delta |{\h y}|) |{\h y}|^\top$$ with $\delta = -\frac{3a}{r^2}$. Invoking Lemma \ref{Ray}, with ${\h e}^\top |{\h y}| = a$ and $\|{\h y}\|^2 = r^2$, the matrix ${\tilde Q}$ has two nonzero eigenvalues:
\[
\lambda_{1,2} = \frac{(2a + \delta r^2) \pm \sqrt{\delta^2 r^4 + 4sr^2 + 4r^2 \delta a}}{2}.
\]
In addition, $0$ is the eigenvalue of ${\tilde Q}$ with multiplicity $n-2$.

By substituting $\delta=-\frac{3a}{r^2}$ into the above equations, we further have
 $$\lambda_{1}=\frac{-a- \sqrt{4s r^2 -3a^2} }{2},\;\mbox{and}\; \lambda_2=\frac{-a+ \sqrt{4s r^2 -3a^2} }{2}.$$
  Thus the matrix ${\tilde Q}$ has two nonzero eigenvalues $\lambda_1,\lambda_2$ and $0$ with  the multiplicity of $(n-2)$.
  Nevertheless, the matrix $Q$ has two nonzero eigenvalues $\frac{\gamma}{r^3}(\lambda_1+a)<0$ and $\frac{\gamma}{r^3}(\lambda_2+a)>0$
  and $\frac{\gamma a}{r^3}$ with  the multiplicity of $(n-2)$.
  We define $\delta_{i}$  as ${\h e}_i^\top (A_{\Lambda})^\top (A_{\Lambda}) {\h e}_i$ $(i\in\Lambda)$ which are all positive.
  On the other hand, it follows from (\ref{LocCond}) and ${\h e}_i^{\top} \nabla^2 f({\h y}){\h e}_i\ge 0$ that
 \begin{eqnarray*}&&{\h e}_i^\top(A_{\Lambda})^\top (A_{\Lambda}){\h e}_i \nn\\
 &&\ge{\h e}_i^\top\left[\gamma\frac{1}{r^2}\left( \frac{|{\h y}|{\h e}^\top +{\h e}|{\h y|}^\top}{r}\right)-\gamma\frac{3a{|\h y|}{|\h y|}^\top}{r^5}+\gamma\frac{a}{r^3}\right] {\h e}_i. \end{eqnarray*}
  In addition, note that $x_i=y_i$ for $i\in\Lambda$, it further leads to
 \begin{eqnarray*}\tilde{\delta_i}\ge \frac{2|x_i|}{r^3}+\frac{a}{r^3}-\frac{3a x_i^2}{r^5},\;\;i\in\Lambda.
 \end{eqnarray*}
 Consider the unary quadratic inequality:
 \begin{eqnarray*}\label{tunar} \frac{3a}{r^5} t^2-\frac{2}{r^3} t+{\tilde \delta_i}-\frac{a}{r^3}\ge 0\end{eqnarray*}
  with respect to $t$.
 If $${\tilde \delta_i}< \frac{a}{r^3}+\frac{1}{3ar},$$
 then  $\displaystyle{\Delta_i=\frac{4}{r^6}+\frac{12a^2}{r^8}-\frac{12 a{\tilde \delta_i}}{r^5}>0}$.
 Thus, (i) is valid.
 Then $|x_i|\le \kappa_{1,i}$ or $|x_i|\ge \kappa_{2,i}$.

 For (ii)
 $\kappa_{1,i}<0$ due to $\frac{a}{r^3}\ge{\tilde \delta}_i$.
 Thus, $|x_i|\ge \kappa_{2,i}$.
 (iii) holds trivially.
\end{proof}

Theorem \ref{bounduncon} concerns the local minimizers of the  model (\ref{L1o2uncon}).
We now give an example to illustrate the bound theory in Theorem \ref{bounduncon}.
\begin{example}\label{boundex} Consider the  $L_1/L_2$ minimization problem for any given $\gamma>3/4$:
\begin{eqnarray*}\min f({\h x}):=\gamma\frac{\|{\h x}\|_1}{\|{\h x}\|_2}+\frac{1}{2}(x_1+x_2-1)^2. \end{eqnarray*}
Note that $A=(1,1)$ and $b=1$. We easily see that  there are three stationary points for this problem:
\begin{eqnarray*}{\h x}^{(1)}=(1,0)^\top,\; {\h x}^{(2)}=(0,1)^\top, \;{\h x}^{(3)}=(1/2,1/2)^\top. \end{eqnarray*}
Obviously, both ${\h x}^{(1)}$ and $ {\h x}^{(2)}$ are  global minimizers, and ${\h x}^{(3)}$ is
not a local minimizer.
We use the vector ${\h x}^{(1)}$ to explain the bound theory.
The support set of  ${\h x}^{(1)}$ is $\Lambda_1=\{1\}$ and hence $\delta_1={\h e}_1^\top (A_{\Lambda_1}^\top A_{\Lambda_1}) {\h e}_1=1$. For this minimizer, one has
$r=1$ and $a=1$. Thus,
$\Delta_1=16-12/\gamma$, and $\Delta_1>0$ when $\gamma>3/4$.
 We have the following two cases regarding the value of $\gamma$:
\begin{itemize}
\item Case (a): $3/4<\gamma<1$.\\
From Item (iii) of Theorem \ref{bounduncon},
\begin{eqnarray*}|x^{(1)}_1|\le \frac{2-\sqrt{16-12/\gamma}}{6},\;
\mbox{or}\;|x^{(1)}_1|\ge \frac{2+\sqrt{16-12/\gamma}}{6}.\end{eqnarray*}
\item Case (b): $\gamma\ge 1$.\\
From Item (ii) of Theorem \ref{bounduncon},
\begin{eqnarray*}|x^{(1)}_1|\ge \frac{2+\sqrt{16-12/\gamma}}{6}.\end{eqnarray*}
If $\gamma=1$, we have $|x^{(1)}_1|\ge 2/3$.
If $\gamma\to +\infty$, the lower bound of $\frac{2+\sqrt{16-12/\gamma}}{6}\to 1$.
It implies that the lower bound becomes tighter and tighter as $\gamma\to +\infty$.
\end{itemize}

\end{example}

%\begin{remark}
%%For $\Phi=\frac{1}{2}\|A{\h x}-{\h b}\|^2$, we also have a unified upper bound for the first-order stationary points.
%
%\end{remark}
%\begin{theorem}\label{Genmodel} Consider the unconstrained model (\ref{L1o2uncon}) with $\Phi({\h x})=\frac{1}{2}\|{\h x}\|^2$ and ${\cal X}={\mathbb R}_+^n$.
%For any ${\h x}^*\in{\cal L}_{u}^*$, let $a=\|\h x\|_1$, $r=\|\h x\|_2$ and $\delta_{i}={\h e}_i^\top (A_{\Lambda}^\top A_{\Lambda}) {\h e}_i$ $(i\in\Lambda)$  with ${\tilde\delta_i} =\delta_i/\gamma$.
%Assume that $\gamma>\frac{(\min_{i\in\Lambda}{\delta_i}){3ar^3}}{3a^2+r^2}$.
%If ${\tilde \delta_i}< \frac{a}{r^3}+\frac{1}{3ar}$,
%then define
%\begin{eqnarray*}\displaystyle{\kappa_{1,i}=\frac{\frac{2}{r^3}-\sqrt{\frac{4}{r^6}+\frac{12a^2}{r^8}-\frac{12 a{\tilde \delta_i}}{r^5}}}{\frac{6a}{r^5}}}, \; \;\;\;\displaystyle{\kappa_{2,i}=\frac{\frac{2}{r^3}+\sqrt{\frac{4}{r^6}+\frac{12a^2}{r^8}-\frac{12 a{\tilde \delta_i}}{r^5}}}{\frac{6a}{r^5}}}.\end{eqnarray*}
%\begin{itemize}
%\item[(i)]If $\frac{a}{r^3}\ge{\tilde \delta}_i$, then $x_i\ge {\kappa_{2,i}}$;
%\item[(ii)]If $\frac{a}{r^3}<{\tilde \delta_i} <\frac{a}{r^3}+\frac{1}{3ar}$,
%then
%$0<x_i\le \kappa_{1,i}$ or $x_i\ge \kappa_{2,i}$.
%%\item[(iii)]
%\end{itemize}
%\end{theorem}
%\begin{proof} The proof is similar to Theorem \ref{bounduncon}, thus we omit here.\end{proof}

\begin{remark} As observed in \cite[Section VI]{WYYL20}, a higher dynamic range in the ground truth ${\h x}^*$ leads to better recovery results. The rationale behind this phenomenon is that there exists an index $i$ such that $|x_i| \leq \kappa_{1,i}$, provided that $\kappa_{1,i} > 0$, and another index $j$ such that $|x_j| \geq \kappa_{2,j}$. Consequently, the following inequality holds:
\begin{eqnarray*}
R({\h x}^*) \geq \frac{\frac{2}{r^3} + \sqrt{\frac{4}{r^6} + \frac{12a^2}{r^8} - \frac{12 a {\tilde \delta_j}}{r^5}}}{\frac{2}{r^3} - \sqrt{\frac{4}{r^6} + \frac{12a^2}{r^8} - \frac{12 a {\tilde \delta_i}}{r^5}}}.
\end{eqnarray*}
This result provides insight into the phenomenon reported in \cite{WYYL20}.
\end{remark}
\begin{remark}In our previous work \cite{10542092}, as discussed in \cite[Section V-B]{10542092}, we observed that the performance of the two-phase heuristic acceleration algorithm deteriorates when the Hard Shrinkage Step (i.e., ${\text{HARD}}({\h x},\tau)$ in \cite[Algorithm 1]{10542092}) is omitted. Furthermore, the step size $\tau>0$ in the Hard Shrinkage Step has a significant impact on the recovery results, as illustrated in Fig. 3 of \cite{10542092}. Selecting an appropriate value for $\tau$ can lead to a lower relative error. Additionally, Theorem \ref{bounduncon} explains for the effectiveness of the Hard Shrinkage Step in dealing with realistic datasets.

\end{remark}

\section{Hardness of $L_1$ over $L_2$ minimization} \label{NPHard} In this section, inspired by the seminal works \cite{Ge2011,Chen14,HuoChen},
 we prove that both (\ref{L1o2Con}) and (\ref{L1o2uncon}) are strongly NP-hard.

\subsection{Strong NP-hardness of the constrained model (\ref{L1o2Con})}
First, we establish the NP-hardness of (\ref{L1o2Con}) when ${\cal X}={\mathbb R}_+^n/{\mathbb R}^n$. To show the  NP-hardness, we employ a polynomial-time reduction from the well-known NP-complete partition problem \cite{HJ}.
The partition problem can be described as follows: Given a set $\cal S$ of rational numbers $\{a_1, a_2, \ldots, a_n\}$, can we partition $\cal S$ into two disjoint subsets, ${\cal S}_1$ and ${\cal S}_2$, such that the sums of the elements in each subset are equal?

\begin{theorem}\label{theo0}
The $L_1$ over $L_2$ minimization problem (\ref{L1o2Con}) with ${\cal X}={\mathbb R}_+^n$ or ${\cal X}={\mathbb R}^n$
is  NP-hard.
\end{theorem}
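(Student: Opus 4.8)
The plan is to construct a polynomial-time reduction from the partition problem to the feasibility/optimality of \eqref{L1o2Con}. Given an instance $\{a_1,\dots,a_n\}$ of partition, I would build a sensing matrix $A$ and vector $\h b$ so that the global minimum value of $\|\h x\|_1/\|\h x\|_2$ over the affine feasible set equals a fixed threshold (naturally $\sqrt{2}$, since a balanced $\pm$ vector in two ``halves'' achieves $\|\h x\|_1/\|\h x\|_2=\sqrt{n'/ (n'/?)}$ — more precisely a vector with exactly two equal-magnitude groups realizes the ratio $\sqrt{2}$) if and only if the partition instance is a YES-instance. The key algebraic fact I would lean on is the elementary inequality $\|\h x\|_1/\|\h x\|_2\ge \sqrt{\sharp(\supp(\h x))}$ with equality iff all nonzero entries of $\h x$ have equal magnitude; this converts the combinatorial ``equal-sum'' requirement into a statement about when the ratio attains its floor.

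The concrete encoding I would use: introduce variables $x_1,\dots,x_n$ (one per element $a_i$) together with a small number of auxiliary variables, and impose the linear constraint $\sum_i a_i x_i = 0$ together with normalization-type constraints (e.g.\ $\sum_i x_i = $ some constant, or pairing constraints $x_i + x_{n+i} = c$) that force each $x_i$ to effectively take values in $\{+t,-t\}$ at any ratio-minimizing point. A subset $\cal S_1 := \{i : x_i = +t\}$ then satisfies $\sum_{i\in \cal S_1} a_i = \sum_{i\notin \cal S_1} a_i$ exactly when the constraint $\sum a_i x_i = 0$ holds with such a $\pm t$ vector. When $\cal X = \mathbb{R}^n_+$ I would need the standard trick of splitting each free variable into a difference of two nonnegative variables and padding $A$ accordingly (using a Kronecker-product / block structure, which is why Lemma on Kronecker products was recorded), so that the same argument goes through over the nonnegative orthant; I would also need to argue that the reduction is polynomial in the bit-length of the $a_i$, which is immediate since $A$ and $\h b$ have entries that are either the $a_i$ themselves or small fixed constants.

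The two directions of the reduction are then: (forward) if the partition instance is YES with witness $\cal S_1$, exhibit the explicit feasible $\h x$ with $\pm$ entries achieving ratio $\sqrt{2}$ (or whatever the designed floor is), hence the optimal value is $\le$ that floor; (backward) if the optimal value equals the floor, use the equality case of $\|\h x\|_1/\|\h x\|_2 \ge \sqrt{\sharp(\supp(\h x))}$ to conclude the optimal $\h x$ has all equal-magnitude nonzeros, and combine with the linear constraints to extract a valid partition. The main obstacle I anticipate is designing the linear constraints so that the \emph{global} minimizer is forced to have the rigid $\pm t$ structure — a naive encoding might admit feasible points with fractional or zero entries whose ratio dips at or below the threshold for reasons unrelated to a partition, so the constraints must simultaneously (a) be satisfiable by a genuine partition vector, (b) exclude ``cheating'' low-ratio vectors, and (c) handle the scale-invariance of $\|\cdot\|_1/\|\cdot\|_2$, which means one cannot fix the scale and must reason up to positive scaling throughout. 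Getting the support-size bookkeeping right so that the achievable floor is exactly the partition-encoding value, and no smaller, is the delicate part; after that the reduction and its polynomiality are routine.
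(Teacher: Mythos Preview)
Your high-level plan (polynomial reduction from Partition, with pairing constraints and a nonnegative splitting for the $\mathcal X=\mathbb R^n_+$ case) is the same as the paper's. However, the central inequality you state is backwards: by Cauchy--Schwarz one has
\[
\frac{\|\h x\|_1}{\|\h x\|_2}\ \le\ \sqrt{\sharp(\supp(\h x))},
\]
with equality iff all nonzero entries have equal magnitude. Thus an all-$\pm t$ vector \emph{maximizes} the ratio (it gives $\sqrt{n}$, not $\sqrt{2}$), so it cannot serve as the certificate that the \emph{minimum} hits a designed floor. As written, your backward direction (``optimal value equals the floor $\Rightarrow$ equal-magnitude $\Rightarrow$ partition'') collapses, because the inequality you invoke gives an upper bound, not a lower bound, on the objective.

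The paper repairs exactly this point with a different inequality. It takes $\h u=(\h x,\h y)\in\mathbb R^{2n}_+$ subject to $\h x+\h y=\h e$ and $\h a^\top(\h x-\h y)=0$, and uses that for nonnegative $\h x,\h y$,
\[
\|\h x\|_2^2+\|\h y\|_2^2\ \le\ \|\h x+\h y\|_2^2,
\]
hence
\[
\frac{\|\h u\|_1}{\|\h u\|_2}
=\frac{\|\h x\|_1+\|\h y\|_1}{\sqrt{\|\h x\|_2^2+\|\h y\|_2^2}}
\ \ge\ \frac{\|\h x+\h y\|_1}{\|\h x+\h y\|_2}
=\frac{\|\h e\|_1}{\|\h e\|_2}=\sqrt{n},
\]
with equality iff $\langle\h x,\h y\rangle=0$, i.e.\ each pair is $(1,0)$ or $(0,1)$. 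This gives the needed \emph{lower} bound on the minimum and pins the threshold at $\sqrt{n}$; the $\{0,1\}$ structure then reads off a valid bipartition from the constraint $\h a^\top(\h x-\h y)=0$. Your sketch already contains the pairing idea $x_i+x_{n+i}=c$; what is missing is replacing the (wrong-direction) support-size inequality with the cross-term inequality above, and correcting the target value from $\sqrt{2}$ to $\sqrt{n}$.
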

\begin{proof}First, we prove the NP-hardness of problem (\ref{L1o2Con})  with ${\cal X}={\mathbb R}^n_+$.
 An instance of the partition problem can be reduced to an instance of  (\ref{L1o2Con}) with ${\cal X}={\mathbb R}^n_+$  where the optimal value of $\sqrt{n}$ if and only if the former has an equitable bipartition. Define ${\h a}=(a_1,a_2,\ldots,a_n)^\top$.
Consider the following problem
 \begin{eqnarray}\label{ConsNPHard} \begin{array}{rl}&\min\limits_{\substack{{\h u}:=\left({\h x},{\h y}\right)\in {\mathbb R}_+^{2n}\\
 {\h x}\in {\mathbb R}_+^{n},\; {\h y}\in {\mathbb R}_+^{n}} } P({\h u}) =\displaystyle{\frac{\|{\h u}\|_1}{\|{\h u}\|_2}}\\[0.4cm]
 s.t. &\left(\begin{array}{cc} {\h a}^\top & -{\h a}^\top\\
                               I_n           &   I_n\\
                               \end{array}\right)\h u =\left(\begin{array}{c}{\bf 0}\\{\h e}\end{array}\right).
 \end{array}
 \end{eqnarray}
 which is an instance of (\ref{L1o2Con}) by setting
 \begin{eqnarray}\label{Ab}{\cal A}:=\left(\begin{array}{cc} {\h a}^\top & -{\h a}^\top\\
                               I_n           &   I_n\\
                               \end{array}\right),\; \mbox{and}\; {\textit{ b}}:=\left(\begin{array}{c}{\bf 0}\\{\h e}\end{array}\right).\end{eqnarray}
 By noting
  $\|{\h x}\|_2^2+\|{\h y}\|_2^2\le \|{\h x}+{\h y}\|_2^2$, we have
  \begin{eqnarray*}P({\h u}) =\displaystyle{\frac{\|{\h x}\|_1+\|{\h y}\|_1}{\sqrt{\|{\h x}\|_2^2+\|{\h y}\|_2^2}}}\ge\displaystyle{\frac{\|{\h x}+{\h y}\|_1}{\|{\h x}+{\h y}\|_2}}=\frac{\|{\h e}\|_1}{\|\h e\|_2}=\sqrt{n}. \end{eqnarray*}
 The problem (\ref{ConsNPHard}) achieves a value of $\sqrt{n}$ if and only if $x_i = 1$ and $y_i = 0$, or $x_i = 0$ and $y_i = 1$, which generates an evenly partitioned set of $\cal S$. Conversely, if the entries of $\cal S$ admit an equitable bipartition, then the problem (\ref{ConsNPHard}) achieves the global solution, i.e., $P({\h u}) = \sqrt{n}$.

Analogously, the same instance of the partition problem can also be
reduced to   (\ref{L1o2Con}) with ${\cal X}={\mathbb R}^n$:
\begin{eqnarray*} \begin{array}{rl}&\min\limits_{\substack{{\h u}:=\left({\h x},{\h y}\right)\in {\mathbb R}^{2n}\\
 {\h x}\in {\mathbb R}^{n},\; {\h y}\in {\mathbb R}^{n}} }  P({\h u}) =\displaystyle{\frac{\|{\h u}\|_1}{\|{\h u}\|_2}}\\[0.5cm]
 s.t. &\left(\begin{array}{cc} {\h a}^\top & -{\h a}^\top\\
                               I_n           &   I_n\\
                               \end{array}\right)\h u =\left(\begin{array}{c}{\bf 0}\\{\h e}\end{array}\right).
 \end{array}
 \end{eqnarray*}

With a similar argument, we can show that the global minimum value of $\sqrt{n}$ is achieved if and only if ${\cal S}$ has an equitable bipartition.  Therefore,  the problem (\ref{L1o2Con}) with ${\cal X} = {\mathbb R}^n$ is NP-hard.
\end{proof}
\medskip
Second, we prove the   strong NP-hardness of (\ref{L1o2Con}) when ${\cal X}={\mathbb R}_+^n/{\mathbb R}^n$ by utilizing
3-partition problem. We recall the strongly NP-hard 3-partition problem: Given a multiset $\mathcal{T}$ containing $n = 3m$ integers $\{a_1, a_2, \ldots, a_n\}$, where the sum of the elements in $\mathcal{T}$ equals $m\kappa$, and each integer $a_i$  satisfies $\kappa/4< a_i< \kappa/2$. The question is whether it is possible to partition set $\mathcal{T}$ into $m$ subsets, with each subset summing to $\kappa$. It implies that each subset must consist of  three elements.

\begin{theorem}\label{theo1}
The $L_1$ over $L_2$ minimization problem (\ref{L1o2Con}) with ${\cal X}={\mathbb R}_+^n$ or ${\cal X}={\mathbb R}^n$
is strongly NP-hard.
\end{theorem}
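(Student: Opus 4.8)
The plan is to mimic the reduction used in Theorem \ref{theo0}, but now starting from the strongly NP-hard 3-partition problem instead of the ordinary partition problem. Given a multiset $\mathcal{T}=\{a_1,\ldots,a_n\}$ with $n=3m$, $\sum_i a_i = m\kappa$, and $\kappa/4 < a_i < \kappa/2$, I would introduce $m n$ decision variables arranged as a matrix $\h X = (x_{i,j}) \in \mathbb{R}_+^{n\times m}$ (or $\mathbb{R}^{n\times m}$ for the ${\cal X}={\mathbb R}^n$ case), where $x_{i,j}$ encodes "element $a_i$ is placed in subset $j$". The natural constraints are: each element goes to exactly one subset, i.e.\ $\sum_{j=1}^m x_{i,j} = 1$ for every $i\in[n]$ (these are the rows of $I_n \otimes \h e^{(m)\top}$ acting on $\operatorname{vec}(\h X)$, up to the Kronecker bookkeeping), and each subset sums to $\kappa$, i.e.\ $\sum_{i=1}^n a_i x_{i,j} = \kappa$ for every $j\in[m]$ (the rows of $\h e^{(n)\top} \otimes (\operatorname{diag}(\h a))$-type blocks). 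Assembling these into a single matrix ${\cal A}$ and right-hand side $b$ via Kronecker products gives an instance of (\ref{L1o2Con}), and crucially the entries of ${\cal A}$ and $b$ are polynomially bounded in the input (indeed they are just the $a_i$'s, $\kappa$, $0$ and $1$), which is exactly what is needed for \emph{strong} NP-hardness.

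The core analytic step is a lower bound on the objective. Writing $\operatorname{vec}(\h X) = \h u$, I want to show $P(\h u) = \|\h u\|_1/\|\h u\|_2 \ge \sqrt{N}$ for the appropriate dimension $N$ (here $N=n$, since the objective only "sees" the $n$ row-sum constraints $\sum_j x_{i,j}=1$), with equality if and only if $\h X$ is a $0/1$ matrix, i.e.\ each element is assigned to a single subset. The argument parallels the one in Theorem \ref{theo0}: let $\h v_i \in \mathbb{R}_+^m$ denote the $i$-th row of $\h X$; then $\|\h v_i\|_1 = 1$ and, by Lemma \ref{pqineqnorm}(ii) (superadditivity of the $2$-norm for vectors with disjoint support, applied coordinate-wise) or more simply by $\|\h v_i\|_2 \le \|\h v_i\|_1 = 1$ with equality iff $\h v_i$ has a single nonzero entry, we get $\|\h u\|_2^2 = \sum_i \|\h v_i\|_2^2 \le \sum_i 1 = n$ while $\|\h u\|_1 = \sum_i \|\h v_i\|_1 = n$, hence $P(\h u) \ge n/\sqrt{n} = \sqrt{n}$, with equality iff every $\h v_i$ is a standard basis vector. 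Then I would observe: $P(\h u) = \sqrt{n}$ is attainable subject to the constraints if and only if there is a $0/1$ assignment respecting $\sum_i a_i x_{i,j} = \kappa$, which (using $\kappa/4 < a_i < \kappa/2$, forcing exactly three elements per subset) is precisely a solution of the 3-partition instance. The ${\cal X}={\mathbb R}^n$ case is handled by the same standard trick used in Theorem \ref{theo0}: double the variables with a sign-split $\h x = \h x^+ - \h x^-$ and an extra copy of the identity block so that at optimum the $\ell_1$ and $\ell_2$ norms collapse to those of a nonnegative vector.

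The main obstacle is not the inequality — that is essentially the same computation as in Theorem \ref{theo0} — but rather the \emph{bookkeeping} needed to certify strong NP-hardness rigorously: one must exhibit the reduction matrix ${\cal A}$ explicitly (likely via Kronecker products, e.g.\ a block of the form $I_m \otimes \h a^\top$ stacked with $\h e^{(m)\top} \otimes I_n$, adjusted to match the $\operatorname{vec}$ convention), verify that its dimensions and all its entries are bounded by a polynomial in $n$ and $\max_i a_i$, and verify that the target value $\sqrt{n}$ is a rational (or at least its square is) so that the decision version "is the optimum $\le \sqrt{n}$?" is well-posed with polynomially-sized data. A secondary subtlety is ensuring the equality-case analysis is tight: one must check that no non-$0/1$ feasible $\h X$ can sneak below $\sqrt{n}$ — but this is immediate from the strict equality condition in the norm inequality — and that feasibility of the constraint system is equivalent to a genuine 3-partition (here the bound $\kappa/4 < a_i < \kappa/2$ does the work, guaranteeing each subset has exactly three elements). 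I would then conclude by noting that a polynomial-time algorithm solving (\ref{L1o2Con}) to global optimality would decide 3-partition in polynomial time in the magnitude of the input, contradicting its strong NP-hardness, and that the identical construction with the sign-split handles ${\cal X}={\mathbb R}^n$.
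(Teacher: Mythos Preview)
For $\mathcal{X}=\mathbb{R}_+^n$ your plan is essentially the paper's proof: the same 3-partition instance, the same Kronecker-product constraint blocks $((\h e^{(m)})^\top\otimes I_n)$ and $(I_m\otimes\h a^\top)$, and the same target value $\sqrt{n}$. Your row-wise estimate ($\|\h v_i\|_1=1$, $\|\h v_i\|_2\le 1$ under nonnegativity) is a mild rephrasing of the paper's column-wise one ($\sum_j\|\h x_j\|_2^2\le\|\sum_j\h x_j\|_2^2=n$); both give $P(\h u)\ge\sqrt n$ with equality exactly when $\h X$ is a $0/1$ assignment matrix, and the equivalence with 3-partition follows as you describe.

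For $\mathcal{X}=\mathbb{R}^n$ your plan diverges from the paper and has a real gap. The paper does \emph{not} sign-split: it reuses the identical constraint system over $\mathbb{R}^{mn}$ and simply asserts, without further argument, that the optimal value is again $\sqrt n$. Your proposed sign-split goes the wrong direction---writing $\h x=\h x^+-\h x^-$ converts an $\mathbb{R}^n$ instance into an $\mathbb{R}_+$ one, not the reverse---and ``an extra copy of the identity block'' is too vague to evaluate. More to the point, the lower bound $P(\h u)\ge\sqrt n$ on which both your sketch and the paper's column argument rest can \emph{fail} once signs are allowed: the row constraint $\sum_j x_{ij}=1$ no longer forces $\|\h v_i\|_1=1$, and the column inequality $\sum_j\|\h x_j\|_2^2\le\|\sum_j\h x_j\|_2^2$ breaks without nonnegativity. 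Concretely, take $n=6$, $m=2$, $a_i\equiv 2$, $\kappa=6$, and let $\h X$ have rows $(100,-99)$, $(-99,100)$, $(1,0)$, $(0,1)$, $(1,0)$, $(0,1)$; all constraints hold, yet $P(\h u)=402/\sqrt{39606}\approx 2.02<\sqrt{6}$. So the $\mathbb{R}^n$ case genuinely requires a different construction or a different analytic step, and your sketch does not supply one.
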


\begin{proof} First,
we describe a reduction from an instance of the 3-partition problem
to an instance of  (\ref{L1o2Con}) with ${\cal X}=\mathbb R_+^n$ where the optimal value $\sqrt{n}$ if and only if the former
has an equitable 3-partition.
Consider  the following minimization problem:
\begin{eqnarray}\label{3par}
&&\begin{array}{rll}
\min\limits_{{\h u}:={\text{vec}}({\h X})\in \mathbb R^{m n}_+}& P(\h u) =\displaystyle{\frac{\|\h u\|_1}{\|\h u\|_2}}&\\[0.3cm]
%s.t.& \sum_{j=1}^m x_{ij} =1,\ \forall\;i\in [n],&\\[0.2cm]
%    & \sum_{i=1}^n a_i x_{ij} =B,\ \forall\;j\in [m],&\\[0.2cm]
s.t. &\left( ({\h e}^{(m)})^\top \otimes I_n\right){\h u}={\h e}^{(n)},&\\[0.1cm]
     &\left({I}_m \otimes {\h a}^\top\right){\h u}=\kappa ({\h e}^{(m)})^\top,&\\[0.1cm]
  %  & x_{ij}\ge0,&\forall \; i\in[n], \ j\in[m],\\[0.2cm]
    & {\h x}_j=(x_{1j},\ldots,x_{nj})^\top\in \mathbb R^n_+,&\\[0.1cm]
    &\; \forall\; j\in[m],& \\[0.1cm]
    &{\h {X}} = ({\h {x}}_1, \dots, {\h {x}}_m) \in \mathbb{R}_+^{n \times m}.&
\end{array}\nn\\
\end{eqnarray}
The first equality constraint implies that
\[
{\h X}{\h e}^{(m)} = {\h e}^{(n)}.
\]
The second equality constraint is equivalent to
\[
{\h a}^\top {\h X} = \kappa ({\h e}^{(m)})^\top.
\]
Note that the objective function value of (\ref{3par}) satisfies
\[
P({\h u}) \geq \frac{\left\|\sum_{j=1}^m {\h x}_j\right\|_1}{\left\|\sum_{j=1}^m {\h x}_j\right\|_2} = \sqrt{n}.
\]
The optimal value is achieved when the matrix ${\h X} = ({\h x}_1, \dots, {\h x}_m) \in \mathbb{R}^{n \times m}$, and each row has exactly one nonzero entry equal to 1, while the sum of each column is 3. This generates an equitable 3-partition of the entries of ${\cal T}$. Conversely, if the entries of ${\cal T}$ have an equitable 3-partition, then the problem (\ref{3par}) must achieve the global solution, i.e.,
$P({\h u}) = \sqrt{n}.$

For the same instance of the 3-partition problem, we consider the following minimization problem in the form (\ref{L1o2Con}) with ${\cal X} = {\mathbb R}^n$.

\begin{eqnarray*}
&&\begin{array}{rll}
\min\limits_{{\h u}:={\text{vec}}({\h X})\in \mathbb R^{m n}}& P(\h u) =\displaystyle{\frac{\|\h u\|_1}{\|\h u\|_2}}&\\[0.3cm]
%s.t.& \sum_{j=1}^m x_{ij} =1,\ \forall\;i\in [n],&\\[0.2cm]
%    & \sum_{i=1}^n a_i x_{ij} =B,\ \forall\;j\in [m],&\\[0.2cm]
s.t. &\left( ({\h e}^{(m)})^\top \otimes I_n\right){\h u}={\h e}^{(n)},&\\[0.1cm]
     &\left({I}_m \otimes {\h a}^\top\right){\h u}=\kappa({\h e}^{(m)})^\top,&\\[0.1cm]
  %  & x_{ij}\ge0,&\forall \; i\in[n], \ j\in[m],\\[0.2cm]
    & {\h x}_j=(x_{1j},\ldots,x_{nj})^\top\in \mathbb R^n,\; \forall\; j\in[m],& \\[0.1cm]
    &{\h {X}} = ({\h {x}}_1, \dots, {\h {x}}_m) \in \mathbb{R}^{n \times m}.&
\end{array}
\end{eqnarray*}
The optimal value of the problem above is also $\sqrt{n}$. By a similar argument, we can demonstrate that the problem (\ref{L1o2Con}) with ${\cal X} = \mathbb{R}^n$ is also strongly NP-hard.
\end{proof}

\subsection{Strong NP-hardness of the unconstrained model (\ref{L1o2uncon})}
%Next, we show that (\ref{L1o2uncon}) is NP-hard in the following theorem.
First, we establish the NP-hardness of (\ref{L1o2uncon}) for the case where \(\mathcal{X} = \mathbb{R}_+^n / \mathbb{R}^n\). This is demonstrated by performing a polynomial-time reduction from the partition problem.
\begin{theorem}\label{theo2}
The $L_1$ over $L_2$ minimization problem (\ref{L1o2uncon}) with ${\cal X}={\mathbb R}_+^n$ or ${\cal X}={\mathbb R}^n$
is NP-hard.
\end{theorem}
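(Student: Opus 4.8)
The plan is to give a polynomial‑time reduction from the (NP‑complete) partition problem, using the same doubling gadget as in the proof of Theorem \ref{theo0} but with the hard constraint replaced by a quadratic penalty. Given a partition instance $\mathcal S=\{a_1,\dots,a_n\}$, set $\h a=(a_1,\dots,a_n)^\top$, let $\mathcal A,\ b$ be the matrix and vector of (\ref{Ab}), fix $\gamma=\tfrac12$, and consider the instance of (\ref{L1o2uncon}) in the $2n$ variables $\h u=(\h x,\h y)\in\mathbb R^{2n}_+$:
\[
\min_{\h u\in\mathbb R^{2n}_+}\ P_\gamma(\h u):=\gamma\,\frac{\|\h u\|_1}{\|\h u\|_2}+\tfrac12\bigl(\h a^\top\h x-\h a^\top\h y\bigr)^2+\tfrac12\|\h x+\h y-\h e\|^2 ,
\]
which is exactly (\ref{L1o2uncon}) with $A=\mathcal A$ and $b$ as in (\ref{Ab}). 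I claim the optimal value of this problem equals $\gamma\sqrt n$ precisely when $\mathcal S$ admits an equitable bipartition; since $\mathcal A,b,\gamma$ are produced in polynomial time, this yields the theorem for $\mathcal X=\mathbb R^n_+$.

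For the lower bound, exactly as in Theorem \ref{theo0} the nonnegativity of $\h x,\h y$ gives $\|\h u\|_1=\|\h x+\h y\|_1$ and $\|\h u\|_2^2=\|\h x\|_2^2+\|\h y\|_2^2\le\|\h x+\h y\|_2^2$, hence with $\h z:=\h x+\h y\ge\bzero$ one gets $P_\gamma(\h u)\ge \gamma\frac{\|\h z\|_1}{\|\h z\|_2}+\tfrac12\|\h z-\h e\|^2$. I would then establish the auxiliary fact that for $0<\gamma<1$,
\[
\min_{\bzero\neq\h z\in\mathbb R^n_+}\Bigl[\gamma\,\frac{\|\h z\|_1}{\|\h z\|_2}+\tfrac12\|\h z-\h e\|^2\Bigr]=\gamma\sqrt n ,
\]
attained only at $\h z=\h e$: the objective is coercive on $\mathbb R^n_+$ and stays above $\gamma\sqrt n$ near the origin, so a minimizer lies in a compact set bounded away from $\bzero$; the stationarity equation (using $\nabla(\|\h z\|_1/\|\h z\|_2)$ on $\h z>\bzero$) shows the only interior critical point is a positive multiple of $\h e$, and that multiple must equal $1$; and on each boundary face $\{z_i=0\}$ the problem reduces to the same one in lower dimension, giving value $\gamma\sqrt k+\tfrac12(n-k)>\gamma\sqrt n$ for $k<n$. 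Therefore $P_\gamma(\h u)\ge\gamma\sqrt n$ for all $\h u\in\mathbb R^{2n}_+$. Conversely, if $\mathcal S$ has an equitable bipartition with indicator $\hat{\h x}\in\{0,1\}^n$, then $\h u=(\hat{\h x},\h e-\hat{\h x})$ satisfies $\mathcal A\h u=b$, $\|\h u\|_1=n$, $\|\h u\|_2=\sqrt n$, so $P_\gamma(\h u)=\gamma\sqrt n$ and the optimal value is exactly $\gamma\sqrt n$.

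For the other direction, I would use that on $\mathbb R^{2n}_+$ the objective $P_\gamma$ is coercive: if $\|\h u^k\|\to\infty$ in $\mathbb R^{2n}_+$ with $\h u^k/\|\h u^k\|\to\h v$, then $\mathcal A\h v\neq\bzero$ (indeed $\ker\mathcal A=\{(\h w,-\h w):\h a^\top\h w=0\}$ meets the orthant only at $\bzero$), so $\|\mathcal A\h u^k-b\|\to\infty$ and $P_\gamma(\h u^k)\to\infty$; hence the minimum is attained at some $\h u^*=(\h x^*,\h y^*)$. If the optimal value is $\gamma\sqrt n$, all the inequalities above are equalities, forcing (i) $\langle\h x^*,\h y^*\rangle=0$ (equality in $\|\h u^*\|_2^2\le\|\h x^*+\h y^*\|_2^2$), (ii) $\h x^*+\h y^*=\h e$ (uniqueness in the auxiliary fact), and (iii) $\h a^\top\h x^*=\h a^\top\h y^*$ (the middle term vanishes). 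By (i) and (ii), $x^*_iy^*_i=0$ and $x^*_i+y^*_i=1$ for every $i$, so $\h x^*\in\{0,1\}^n$ and $\h y^*=\h e-\h x^*$; then (iii) gives $\h a^\top\h x^*=\tfrac12\sum_ia_i$, i.e.\ an equitable bipartition. This closes the equivalence, hence the reduction, and the case $\mathcal X=\mathbb R^n$ is treated analogously, with the signs handled as in the $\mathbb R^n$ part of Theorem \ref{theo0}.

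The step I expect to be the crux is the auxiliary minimization fact --- showing that for a fixed dimension‑independent $\gamma$ the penalized nonnegative‑orthant objective is bounded below by $\gamma\sqrt n$ with $\h e$ as its unique minimizer. This is precisely what forces a near‑optimal $\h u$ to be (almost) feasible and to encode a $\{0,1\}$‑pattern, thereby ruling out that some infeasible or merely ``nearly feasible'' point beats $\gamma\sqrt n$; the remaining bookkeeping (coercivity over the orthant, the equality analysis) is routine, and the $\mathbb R^n$ case only needs the extra care already present in Theorem \ref{theo0} to deal with cancellations between $\|\h x\|_2^2+\|\h y\|_2^2$ and $\|\h x+\h y\|_2^2$.
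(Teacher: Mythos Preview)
Your reduction is exactly the one the paper uses: the same partition instance, the same doubling gadget $(\mathcal A,b)$ from (\ref{Ab}), and the same claimed optimal value $\gamma\sqrt n$ (the paper takes $\gamma=\tfrac12$ and writes the fidelity term without the $\tfrac12$, which is immaterial). The paper, however, simply \emph{asserts} that the optimal value equals $\tfrac{\sqrt n}{2}$ and that equality forces $\langle\h x,\h y\rangle=0$, $\h x+\h y=\h e$, $\h a^\top\h x=\h a^\top\h y$, without ever arguing why an infeasible $\h u$ cannot beat $\tfrac{\sqrt n}{2}$; your auxiliary minimization fact is precisely what fills that gap, and your sketch of it is correct. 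In particular, for $0<\gamma<1$ the interior stationarity equation $(1-\gamma a/r^3)\h z=(1-\gamma/r)\h e$ forces $\h z\parallel\h e$ (the degenerate case $r=\gamma$, $a=\gamma^2$ is impossible since $a\ge r$ would give $\gamma\ge1$), and on the ray $\h z=t\h e$ the condition reduces to $t=1$; your induction over boundary faces then gives the strict inequality $\gamma\sqrt k+\tfrac12(n-k)>\gamma\sqrt n$ for $1\le k<n$ because $\sqrt n+\sqrt k>2\gamma$. So your proposal is correct and, on this point, more complete than the paper's own proof.
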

\begin{proof}
We still use the partition problem.
% which can be described as:
%given  a set ${\cal S}$ with
%rational numbers $\{a_1,a_2,\ldots, a_n\}$ with number $2b$.
%Is there a way to partition ${\cal S}$ into two disjoint subsets
%${\cal S}_1$ and ${\cal S}_2$ such that
%the sum of the numbers in ${\cal S}_1$ and the sum of the numbers in ${\cal S}_2$
%both equal to $b$?
Recall that ${\h a}=(a_1,\ldots,a_n)^\top \in{\mathbb R}^n$.
We consider the minimization problem of the form:

\begin{eqnarray}\label{L1o2twocatnon}
\begin{array}{ll}
\min \limits_{\substack{{\h u}:=\left({\h x},{\h y}\right)\in {\mathbb R}_+^{2n}\\
 {\h x}\in {\mathbb R}_+^{n},\; {\h y}\in {\mathbb R}_+^{n}} } &P({\h u})=\displaystyle{\frac{1}{2}\frac{\|{\h u}\|_1}{\|{\h u}\|_2}}+\|{\cal A}{\h u}-{\textit{ b}}\|_2,
                   % &{\h x}\ge {\bf 0},\;{\h y}\ge {\bf 0}.
                    \end{array}\nn\\
\end{eqnarray}
where ${\cal A}$ and ${\textit{b}}$ are defined in (\ref{Ab}), the optimal value of (\ref{L1o2twocatnon}) is $\frac{\sqrt{n}}{2}$. This value is attained when $\|{\h a}^\top ({\h x} - {\h y})\|^2 = 0$, $\|{\h x} + {\h y} - {\h e}\| = 0$, and $\langle {\h x}, {\h y} \rangle = 0$. The global minimum is achieved if and only if $x_i = 1$ and $y_i = 0$, or $x_i = 0$ and $y_i = 1$, which generates an equitable bipartition of the entries in $\cal S$.

Conversely, if the entries of $\cal S$ admit an equitable bipartition, then the problem (\ref{L1o2twocatnon}) attains the global solution. Thus, the NP-hardness of the problem (\ref{L1o2uncon}) with ${\cal X} = {\mathbb{R}}_+^n$ is established. By a similar argument, we can prove the NP-hardness of the problem (\ref{L1o2uncon}) with ${\cal X} = \mathbb{R}^n$.
\end{proof}
%Next, we prove a stronger result.
Subsequently, we prove the strong NP-hardness of (\ref{L1o2uncon}) under the same condition \(\mathcal{X} = \mathbb{R}_+^n / \mathbb{R}^n\).

\begin{theorem}\label{theo3}
The $L_1$ over $L_2$ minimization problem (\ref{L1o2uncon}) with ${\cal X}={\mathbb R}_+^n$ or ${\cal X}={\mathbb R}^n$
is strongly NP-hard.
\end{theorem}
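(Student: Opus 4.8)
The plan is to imitate the reductions behind Theorems \ref{theo1} and \ref{theo2}: start from the strongly NP-hard 3-partition problem and \emph{penalise}, rather than impose, the two linear constraints of the constrained reduction (\ref{3par}). Given a 3-partition instance ($n=3m$, $\sum_{i}a_i=m\kappa$, $\kappa/4<a_i<\kappa/2$), I would form
\[
\bar{\cal A}=\begin{pmatrix}({\h e}^{(m)})^{\top}\otimes I_n\\ I_m\otimes{\h a}^{\top}\end{pmatrix},\qquad
\bar{\h b}=\begin{pmatrix}{\h e}^{(n)}\\ \kappa\,{\h e}^{(m)}\end{pmatrix},
\]
and consider the instance of (\ref{L1o2uncon}) with ${\h u}={\text{vec}}({\h X})\in{\cal X}^{mn}$ and $\gamma=\tfrac12$, namely $\min_{{\h u}\in{\cal X}^{mn}}P({\h u})$ with $P({\h u})=\tfrac12\|{\h u}\|_1/\|{\h u}\|_2+\tfrac12\|\bar{\cal A}{\h u}-\bar{\h b}\|^2$. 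By the Kronecker identity recalled in Section \ref{Sec-Preliminaries} the penalty equals $\tfrac12\big(\|{\h X}{\h e}^{(m)}-{\h e}^{(n)}\|^2+\|{\h a}^{\top}{\h X}-\kappa({\h e}^{(m)})^{\top}\|^2\big)$. The target is: the optimal value equals $\tfrac{\sqrt n}{2}$ iff the instance is a yes-instance, and is strictly larger otherwise; since $\bar{\cal A},\bar{\h b}$ are computed in time polynomial in the unary-encoded input, this yields strong NP-hardness.

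The ``yes'' direction is immediate: the $0/1$ matrix ${\h X}$ whose columns are the indicators of the three-element blocks satisfies $\bar{\cal A}{\h u}=\bar{\h b}$, $\|{\h u}\|_1=n$, $\|{\h u}\|_2=\sqrt n$, hence $P({\h u})=\tfrac{\sqrt n}{2}$. The heart of the proof is the matching lower bound $P({\h u})\ge\tfrac{\sqrt n}{2}$ for all ${\h u}$, together with a characterisation of equality. For ${\cal X}=\mathbb{R}^n_+$ this goes through cleanly: set ${\h v}={\h X}{\h e}^{(m)}\ge\bzero$; then $\|{\h u}\|_1=\|{\h v}\|_1$ and, since $\langle{\h x}_j,{\h x}_k\rangle\ge0$ (Lemma \ref{pqineqnorm}(ii)), $\|{\h u}\|_2\le\|{\h v}\|_2$, whence $P({\h u})\ge\tfrac12\|{\h v}\|_1/\|{\h v}\|_2+\tfrac12\|{\h v}-{\h e}^{(n)}\|^2$. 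Parametrising by $s=\|{\h v}\|_1$, $t=\|{\h v}\|_2$ and using $\langle{\h v},{\h e}^{(n)}\rangle=s$, the right-hand side is an affine function of $s$ on $[t,\sqrt n\,t]$ for each fixed $t$; minimising over $s$ and then over $t>0$ returns $\tfrac{\sqrt n}{2}$, with equality forcing $t=\sqrt n$, $s=n$, hence ${\h v}={\h e}^{(n)}$ by the equality case of Cauchy--Schwarz. Pushing equality back up the chain forces ${\h X}$ to be $0/1$ with one $1$ per row and pairwise-disjoint column supports, and the second penalty term then forces each column to collect $a$-weight exactly $\kappa$ --- i.e. an equitable 3-partition. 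A short compactness argument on the $\|\cdot\|_F$-bounded near-optimal region shows that in the no-instance case the infimum is not merely unattained but strictly exceeds $\tfrac{\sqrt n}{2}$.

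The main obstacle is the sign-unrestricted case ${\cal X}=\mathbb{R}^n$, where $\|{\h u}\|_2\le\|{\h v}\|_2$ may fail and cancellation among the columns of ${\h X}$ must be controlled by keeping both penalty terms active. One still has $\|{\h u}\|_1\ge\|{\h v}\|_1$, and near-feasibility forces ${\h X}{\h e}^{(m)}\approx{\h e}^{(n)}$ and ${\h a}^{\top}{\h X}\approx\kappa({\h e}^{(m)})^{\top}$ simultaneously; a per-row/per-column estimate (in the spirit of the $\mathbb{R}^n$ variants of Theorems \ref{theo1} and \ref{theo2}) should again yield $P({\h u})\ge\tfrac{\sqrt n}{2}$ with the same equality characterisation. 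Verifying that this combined estimate is tight \emph{only} at the 3-partition configuration --- so that a no-instance has optimal value genuinely above $\tfrac{\sqrt n}{2}$ --- is the delicate point; the remainder is bookkeeping parallel to the already-established Theorems \ref{theo1}--\ref{theo2}.
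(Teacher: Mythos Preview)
Your construction is exactly the one the paper uses: reduce from 3-partition, stack the two linear constraints into $(\bar{\cal A},\bar{\h b})$, penalise them, and claim that the optimal value of the resulting instance of (\ref{L1o2uncon}) equals $\tfrac{\sqrt n}{2}$ iff an equitable 3-partition exists. For ${\cal X}=\mathbb R_+^{n}$ you in fact go further than the paper's one-line bound
\[
P({\h u})\ \ge\ \tfrac12\,\frac{\big\|\sum_{j}{\h x}_j\big\|_1}{\big\|\sum_{j}{\h x}_j\big\|_2}\ =\ \tfrac{\sqrt n}{2}\,:
\]
the displayed equality is only valid once $\sum_j{\h x}_j$ is already a positive multiple of ${\h e}^{(n)}$ (in general the ratio is $\le\sqrt n$, not $\ge$), whereas your argument retains the row-sum penalty $\tfrac12\|{\h v}-{\h e}^{(n)}\|^2$ and minimises the resulting expression in $(s,t)=(\|{\h v}\|_1,\|{\h v}\|_2)$ to obtain the bound unconditionally, together with the equality characterisation. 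So on the nonnegative side your route coincides with the paper's but with the missing justification filled in.

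For ${\cal X}=\mathbb R^{n}$ the paper says only ``can be shown similarly and thus omitted here.'' Your proposal flags the same spot as the obstacle and offers a heuristic (both penalty blocks must be kept simultaneously because $\|{\h u}\|_2\le\|{\h X}{\h e}^{(m)}\|_2$ can fail when signs are unrestricted), but does not supply the actual inequality. That is an honest assessment of where the difficulty lies; just be aware that neither the paper nor your sketch resolves it, so if a complete proof of the sign-unrestricted case is required you still need an additional estimate controlling cancellation among the columns of ${\h X}$.
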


\begin{proof}
%We still use a polynomial time reduction from strongly NP-hard 3-partition problem \cite{HJ}. %Given a multiset $\mathcal{S}$ containing $n = 3m$ integers $\{a_1, a_2, \ldots, a_n\}$, where the sum of the elements in $\mathcal{S}$ equals $mB$, and each integer $a_i$  satisfies $B/4< a_i< B/2$. The question is whether it is feasible to partition set $\mathcal{S}$ into $m$ subsets, with each subset summing to $B$. This, in turn, implies that each subset must consist of exactly three elements.
%We describe a reduction from an instance of the 3-partition problem
We describe a reduction from an instance of the 3-partition problem
to an instance  of the $L_1$ over $L_2$  minimization problem (\ref{L1o2uncon}) that has the  optimal value $\frac{1}{2}\sqrt{n}$ if and only if the former
has an equitable 3-partition.
Consider  the minimization problem
\begin{eqnarray}\label{3parUn}
\begin{array}{ll}
\min\limits_{{\h u}:={\text{vec}}({\h X})\in \mathbb R^{m n}_+} P(\h u)&=\|\left(({\h e}^{(m)})^\top \otimes I_n\right){\h u}-{\h e}^{(n)}\|^2\\[0.2cm]
&+\|\left({I}_m \otimes {\h a}^\top\right){\h u}-\kappa({\h e}^{(m)})^\top\|^2\\[0.2cm]
&+ \displaystyle{ \frac{1}{2}\frac{\|\h u\|_1}{\|\h u\|_2}}\\[0.4cm]
\mbox{where}\; & {\h x}_j=(x_{1j},\ldots,x_{nj})^\top\in \mathbb R^n_+\\[0.3cm]
& \ \forall\ j\in [m]\\[0.3cm]
 &{\h {X}} = ({\h {x}}_1, \dots, {\h {x}}_m) \in \mathbb{R}_+^{n \times m}.
\end{array}\nn\\
\end{eqnarray}
Note that the objective function value of (\ref{3parUn}) satisfies that
$$P({\h u})\ge\frac{1}{2}\frac{\|\sum_{j=1}^m {\h x}_j\|_1}{\|\sum_{j=1}^m {\h x}_j\|_2}=\frac{1}{2}\sqrt{n}.$$
It achieves the optimal value when $\sum_{i=1}^n a_i x_{ij}=\kappa$ for all $j\in[m]$, and $\sum_{j=1}^m x_{ij}=1$ for all $i\in[n]$, which has only one entry equal to 1, and the others are 0.
 This generates an equitable 3-partition of the ${\cal T}$ entries. On the other hand, if the entries of ${\cal T}$ have an equitable 3-partition, then (\ref{3parUn}) must have  a solution $({\h x}_j)_{j=1}^m$ satisfying $\sum_{i=1}^n a_i x_{ij}=\kappa$ for all $j\in[m]$ and $\sum_{j=1}^m x_{ij}=1$ for all $i\in[n]$ such that $P({\h u}) = \frac{1}{2}\sqrt{n}$. Thus the strong NP-hardness of (\ref{L1o2uncon}) with ${\cal X} = {\mathbb R}_+^n$ is proved.
 The strong NP-hardness of (\ref{L1o2uncon}) with ${\cal X} = {\mathbb R}^n$ can be shown similarly and thus omitted here.
\end{proof}

\section{Hardness of $L_p$ over $L_q$ minimization}\label{Lpq}

In this section, we extend our previous results by exploring more flexible regularization models. We consider the following generalized constrained optimization problem:

\begin{equation}
\begin{aligned}
\min_{\mathbf{x}} & \quad \frac{\|\mathbf{x}\|_{p}}{\|\mathbf{x}\|_{q}} \\[0.2cm]
\text{s.t.} & \quad \mathbf{x} \in \{\mathbf{x} \in \mathcal{X} \mid A \mathbf{x} = \mathbf{b}\},
\end{aligned}
\label{L1o2Conpq}
\end{equation}
where \(\mathcal{X} = \mathbb{R}_+^n\) or \(\mathbb{R}^n\), with \(0 < p \leq 1\) and \(1 < q < +\infty\).

Additionally, we analyze the corresponding unconstrained model:

\begin{equation}
\min_{\mathbf{x} \in \mathcal{X}} \gamma \frac{\|\mathbf{x}\|_{p}}{\|\mathbf{x}\|_{q}} + \frac{1}{2}\|A \mathbf{x} - \mathbf{b}\|_{2}^{2}.
\label{L1o2unconpq}
\end{equation}

By employing the analysis similar to the previous section and invoking Lemma \ref{pqineqnorm}, we can show that both the constrained model \eqref{L1o2Conpq} and the unconstrained model \eqref{L1o2unconpq} are NP-hard and strongly NP-hard. The key results are summarized in the following four theorems and the detailed proof of these results is omitted here.

\begin{theorem}\label{theo0G}
The minimization problem involving the \(L_p\) norm (\(0 < p \leq 1\)) over the \(L_q\) norm (\(1 < q < +\infty\)) for the constrained model \eqref{L1o2Conpq}, with \(\mathcal{X} = \mathbb{R}_+^n\) or \(\mathcal{X} = \mathbb{R}^n\), is NP-hard.
\end{theorem}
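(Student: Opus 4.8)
The plan is to mirror the reduction used for Theorem \ref{theo0}, from the NP-complete partition problem, with the $L_1$/$L_2$-specific manipulations replaced by the termwise inequalities of Lemma \ref{pqineq}. Given an instance $\mathcal{S}=\{a_1,\dots,a_n\}$, I would put $\h a=(a_1,\dots,a_n)^\top$ and take $\mathcal{A}$ and $b$ exactly as in \eqref{Ab}, so that the feasible set of the resulting instance of \eqref{L1o2Conpq} (with variable $\h u=(\h x,\h y)$) is $\{\h u : \h a^\top(\h x-\h y)=0,\ \h x+\h y=\h e\}$, intersected with $\mathbb{R}^{2n}_+$ or $\mathbb{R}^{2n}$ according to $\mathcal{X}$. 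The claim to be proved is that the optimal value of this instance equals $n^{1/p-1/q}$ (which specializes to $\sqrt n$ when $p=1,q=2$) if and only if $\mathcal{S}$ admits an equitable bipartition.

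For the forward bound, on the $\mathcal{X}=\mathbb{R}^n_+$ side this is a coordinatewise computation: since $x_i+y_i=1$, Lemma \ref{pqineq}(i) with $p\in(0,1]$ gives $x_i^p+y_i^p\ge 1$ and Lemma \ref{pqineq}(ii) with $q>1$ gives $x_i^q+y_i^q\le 1$; summing over $i\in[n]$ yields $\|\h u\|_p^p\ge n\ge\|\h u\|_q^q$, whence
\[
P(\h u)=\frac{\|\h u\|_p}{\|\h u\|_q}=\frac{(\|\h u\|_p^p)^{1/p}}{(\|\h u\|_q^q)^{1/q}}\ge\frac{n^{1/p}}{n^{1/q}}=n^{1/p-1/q},
\]
and, by the equality cases of Lemma \ref{pqineq}, equality holds precisely when $x_i\in\{0,1\}$ for every $i$. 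For such a binary $\h u$ the remaining constraint $\h a^\top(\h x-\h y)=\sum_i a_i(2x_i-1)=0$ says exactly that $\{i:x_i=1\}$ and its complement split $\mathcal{S}$ into two subsets of equal sum; conversely, any equitable bipartition produces a feasible binary $\h u$ attaining the bound. This settles the equivalence for $\mathcal{X}=\mathbb{R}^n_+$. For $\mathcal{X}=\mathbb{R}^n$ the same matrix and right-hand side are used, and one argues as in Theorem \ref{theo0} that moving a coordinate out of $\{0,1\}$ (in particular making it negative) cannot push $P$ below $n^{1/p-1/q}$, using $|x_i|^p+|y_i|^p\ge|x_i+y_i|^p=1$ for $p\le 1$ to keep the numerator controlled.

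The part that transcribes the two-sided termwise inequalities is routine; the part that needs care, and which I expect to be the main obstacle, is the signed case $\mathcal{X}=\mathbb{R}^n$. There, unlike the numerator, $\|\h u\|_q^q$ is no longer bounded by $n$ once coordinates leave $[0,1]$, so $P(\h u)\ge n^{1/p-1/q}$ cannot be read off termwise and must instead be obtained by quantifying the simultaneous growth of $\|\h u\|_p$ and $\|\h u\|_q$ — exactly the balancing that is implicit in the corresponding step of Theorem \ref{theo0}. Once that is handled, the reduction is polynomial in the size of $\mathcal{S}$ and the equivalence above yields NP-hardness; replacing the partition construction here by the 3-partition construction of Theorem \ref{theo1} then upgrades the argument to strong NP-hardness in the subsequent theorems.
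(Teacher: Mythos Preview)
Your reduction for $\mathcal{X}=\mathbb{R}^n_+$ is exactly what the paper intends: it is the argument of Theorem~\ref{theo0} with Lemma~\ref{pqineq} supplying the coordinatewise bounds $x_i^p+y_i^p\ge 1$ and $x_i^q+y_i^q\le 1$, and it goes through cleanly. The paper's own ``proof'' of Theorem~\ref{theo0G} is only a pointer back to that argument (via Lemma~\ref{pqineqnorm}), so on the nonnegative side you match the paper and are correct.

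The signed case $\mathcal{X}=\mathbb{R}^n$, however, is a genuine gap, not a technicality to be tidied up. The lower bound $P(\h u)\ge n^{1/p-1/q}$ you are hoping to salvage is \emph{false} once negative coordinates are allowed, already for $p=1$, $q=2$. With $n=3$, take $\h x=(M,0,0)$, $\h y=(1-M,1,1)$; then $\h x+\h y=\h e$ and
\[
\frac{\|\h u\|_1^{\,2}}{\|\h u\|_2^{\,2}}=\frac{(2M+1)^2}{2M^2-2M+3}<3\qquad\text{for every }M>4,
\]
so the alleged minimum $\sqrt{3}$ is undercut. The additional constraint $\h a^\top(\h x-\h y)=0$ does not save the reduction: with $\h a=(1,100,101)$ (a yes-instance of partition) the point $\h x=(100,\,0.01,\,0)$, $\h y=(-99,\,0.99,\,1)$ is feasible and gives $P(\h u)^2\approx 2.04<3$; the same construction with $\h a=(1,100,100)$ (a no-instance) also produces feasible $\h u$ with $P(\h u)<\sqrt{3}$. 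Hence ``optimal value $\le n^{1/p-1/q}$'' is \emph{not} equivalent to solvability of the partition instance, and the decision threshold fails on both sides. The ``balancing implicit in the corresponding step of Theorem~\ref{theo0}'' that you appeal to is not actually carried out there either---the paper simply writes ``with a similar argument''---so there is nothing to import. To handle $\mathcal{X}=\mathbb{R}^n$ you need a genuinely different construction or threshold, not the same $(\mathcal{A},b)$.
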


\begin{theorem}\label{theo1G}
The minimization problem involving the \(L_p\) norm (\(0 < p \leq 1\)) over the \(L_q\) norm (\(1 < q < +\infty\)) for the constrained model \eqref{L1o2Conpq}, with \(\mathcal{X} = \mathbb{R}_+^n\) or \(\mathcal{X} = \mathbb{R}^n\), is strongly NP-hard.
\end{theorem}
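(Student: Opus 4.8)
The plan is to adapt the $3$-partition reduction of Theorem~\ref{theo1}, replacing the Cauchy--Schwarz estimate used there by the sub-/super-additivity bounds of Lemma~\ref{pqineq} (in vector form, Lemma~\ref{pqineqnorm}), and replacing the threshold $\sqrt n$ by $n^{1/p-1/q}$. Given a $3$-partition instance --- a multiset $\mathcal T=\{a_1,\dots,a_n\}$ with $n=3m$, $\sum_{i}a_i=m\kappa$, and $\kappa/4<a_i<\kappa/2$ --- I would build, in time polynomial in the unary length of $\mathcal T$, the instance of \eqref{L1o2Conpq} over $\mathcal X=\mathbb R^{mn}_+$ with decision variable $\mathbf u=\mathrm{vec}(\mathbf X)$ for $\mathbf X=(\mathbf x_1,\dots,\mathbf x_m)\in\mathbb R^{n\times m}_+$, objective $P(\mathbf u):=\|\mathbf u\|_p/\|\mathbf u\|_q$, and constraints $\big((\mathbf e^{(m)})^\top\otimes I_n\big)\mathbf u=\mathbf e^{(n)}$ and $\big(I_m\otimes\mathbf a^\top\big)\mathbf u=\kappa(\mathbf e^{(m)})^\top$; equivalently $\mathbf X\mathbf e^{(m)}=\mathbf e^{(n)}$ (every row of $\mathbf X$ sums to $1$) and $\mathbf a^\top\mathbf X=\kappa(\mathbf e^{(m)})^\top$ (every column of $\mathbf X$ is $\mathbf a$-weighted to $\kappa$). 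The target is to show that the optimal value of this instance equals $n^{1/p-1/q}$ if and only if $\mathcal T$ admits an equitable $3$-partition; since $3$-partition is strongly NP-hard and the reduction is polynomial in the unary encoding of the $a_i$, strong NP-hardness of \eqref{L1o2Conpq} with $\mathcal X=\mathbb R^n_+$ follows.

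The analytic core is a single chain of inequalities. Fix any feasible $\mathbf u$. Using nonnegativity of the $\mathbf x_j$, the constraint $\sum_{j=1}^{m}\mathbf x_j=\mathbf e^{(n)}$, and Lemma~\ref{pqineq} applied coordinatewise --- part (i), sub-additivity of $t\mapsto t^p$ for $0<p\le1$, and part (ii), super-additivity of $t\mapsto t^q$ for $q>1$ --- one gets
\[
\|\mathbf u\|_p^p=\sum_{j=1}^m\|\mathbf x_j\|_p^p\ \ge\ \Big\|\sum_{j=1}^m\mathbf x_j\Big\|_p^p=n,\qquad \|\mathbf u\|_q^q=\sum_{j=1}^m\|\mathbf x_j\|_q^q\ \le\ \Big\|\sum_{j=1}^m\mathbf x_j\Big\|_q^q=n,
\]
hence $P(\mathbf u)\ge n^{1/p}/n^{1/q}=n^{1/p-1/q}$, a genuine (strictly-greater-than-$1$) bound because $p\le1<q$. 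Equality $P(\mathbf u)=n^{1/p-1/q}$ forces $\|\mathbf u\|_p^p=n$ and $\|\mathbf u\|_q^q=n$ simultaneously; by the equality clauses of Lemma~\ref{pqineq} (equivalently the orthogonality clauses of Lemma~\ref{pqineqnorm}, which for nonnegative vectors amounts to disjointness of supports) the columns $\mathbf x_1,\dots,\mathbf x_m$ must then have pairwise disjoint supports, i.e.\ each row of $\mathbf X$ carries a single nonzero entry, which by the row-sum constraint equals $1$. Thus $\mathbf X\in\{0,1\}^{n\times m}$ is a selection matrix, and $\mathbf a^\top\mathbf x_j=\kappa$ with $\kappa/4<a_i<\kappa/2$ forces each column to select exactly three indices --- an equitable $3$-partition of $\mathcal T$. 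Conversely, any equitable $3$-partition produces such a selection matrix, which is feasible and attains $P(\mathbf u)=n^{1/p-1/q}$, settling the case $\mathcal X=\mathbb R^n_+$.

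For $\mathcal X=\mathbb R^n$ I would keep the same construction but argue through the sign-insensitive forms of the lemmas: the lower bound $\|\mathbf u\|_p^p=\sum_j\|\mathbf x_j\|_p^p\ge\sum_i\big|\sum_j x_{ij}\big|^p=n$ survives the presence of signs (since $\big|\sum_j x_{ij}\big|\le\sum_j|x_{ij}|$ and $t\mapsto t^p$ is still sub-additive), and one then needs matching control of $\|\mathbf u\|_q$ and the attendant equality characterization, exactly as in the $\mathbb R^n$ part of Theorem~\ref{theo1}. I expect this to be the step requiring the most care: for $q\neq2$ the inequality $\|\mathbf u\|_q^q\le\big\|\sum_j\mathbf x_j\big\|_q^q$ is no longer a termwise consequence of Lemma~\ref{pqineq}(ii) once signs appear, so certifying that $n^{1/p-1/q}$ remains a valid global lower bound over $\mathbb R^{mn}$ needs additional input --- either exploiting that the affine constraints already pin the coordinate sums $\sum_j x_{ij}=1$ and $\sum_i a_i x_{ij}=\kappa$, or augmenting the construction with nonnegative slack variables so that optimality forces nonnegative coordinates, the slack-variable device standard for lifting $\mathbb R^n_+$ reductions to $\mathbb R^n$. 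Once the lower bound is secured, the equality analysis and the reduction conclusion carry over verbatim, which completes the proof.
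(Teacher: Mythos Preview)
Your approach is exactly what the paper intends: it explicitly omits the proof and says to adapt the $3$-partition reduction of Theorem~\ref{theo1} using Lemma~\ref{pqineqnorm} in place of the $L_1/L_2$-specific inequalities, with the threshold becoming $n^{1/p-1/q}$ --- precisely your construction, your chain $\|\mathbf u\|_p^p\ge n$, $\|\mathbf u\|_q^q\le n$, and your equality analysis via disjoint supports. Your caution about the $\mathcal X=\mathbb R^n$ case is well placed and in fact more careful than the paper, which passes over this point both in Theorem~\ref{theo1} and here with ``by a similar argument''; note that Lemma~\ref{pqineqnorm}(ii) as written does not hold for sign-indefinite vectors (take $\mathbf a=-\mathbf b$), so some additional device --- such as your slack-variable lifting --- is genuinely needed to make the $\mathbb R^n$ reduction rigorous.
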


\begin{theorem}\label{theo2G}
The minimization problem involving the \(L_p\) norm (\(0 < p \leq 1\)) over the \(L_q\) norm (\(1 < q < +\infty\)) for the unconstrained model \eqref{L1o2unconpq}, with \(\mathcal{X} = \mathbb{R}_+^n\) or \(\mathcal{X} = \mathbb{R}^n\), is NP-hard.
\end{theorem}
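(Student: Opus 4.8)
The plan is to reproduce almost verbatim the reduction used for Theorem~\ref{theo2}, replacing the elementary $L_2$ identities there by Lemma~\ref{pqineqnorm}; since the partition problem is NP-complete, this yields Theorem~\ref{theo2G}. Given an instance $\mathcal{S}=\{a_1,\dots,a_n\}$ of the partition problem, set $\h a=(a_1,\dots,a_n)^\top$, let $\mathcal{A}$ and $\h b$ be the gadget from \eqref{Ab}, introduce $\h u=(\h x,\h y)$ with $\h x,\h y\in\mathbb R^n_+$ (the case ${\cal X}=\mathbb R^n$ is treated at the end), fix a small rational $\gamma>0$ chosen as described below, and form the instance
\[
\min_{\h u} \; P(\h u):=\gamma\,\frac{\|\h u\|_p}{\|\h u\|_q}+\frac12\|\mathcal{A}\h u-\h b\|_2^2
\]
of \eqref{L1o2unconpq}. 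The claim is that its optimal value equals $\gamma\, n^{1/p-1/q}$ and is attained precisely when $\mathcal{S}$ admits an equitable bipartition.

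One implication is immediate: a bipartition $\mathcal{S}=\mathcal{S}_1\cup\mathcal{S}_2$ gives the $0/1$ point $\h x=\sum_{i\in\mathcal{S}_1}\h e_i$, $\h y=\sum_{i\in\mathcal{S}_2}\h e_i$, for which $\h x+\h y=\h e$ and $\h a^\top(\h x-\h y)=0$, so $\mathcal{A}\h u=\h b$, while $\|\h u\|_p=n^{1/p}$ and $\|\h u\|_q=n^{1/q}$, hence $P(\h u)=\gamma\,n^{1/p-1/q}$. For the converse I need the uniform lower bound $P(\h u)\ge\gamma\,n^{1/p-1/q}$. Writing $\h s:=\h x+\h y\in\mathbb R^n_+$ and applying Lemma~\ref{pqineq} coordinatewise gives $\|\h u\|_p^p=\sum_i(x_i^p+y_i^p)\ge\sum_i(x_i+y_i)^p=\|\h s\|_p^p$ and $\|\h u\|_q^q=\sum_i(x_i^q+y_i^q)\le\sum_i(x_i+y_i)^q=\|\h s\|_q^q$, whence $\|\h u\|_p/\|\h u\|_q\ge\|\h s\|_p/\|\h s\|_q$, with equality forcing disjoint supports, i.e.\ $\langle\h x,\h y\rangle=0$. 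Since also $\|\mathcal{A}\h u-\h b\|_2^2\ge\|\h s-\h e\|_2^2$, the claim reduces to the scalar-free estimate
\[
\gamma\,\frac{\|\h s\|_p}{\|\h s\|_q}+\tfrac12\|\h s-\h e\|_2^2\ \ge\ \gamma\, n^{1/p-1/q}\qquad\text{for all }\h s\in\mathbb R^n_+ ,
\]
where $\|\h s\|_p/\|\h s\|_q\le n^{1/p-1/q}$ holds by the power-mean inequality, with equality exactly on $\{c\h e:c>0\}$. Tracing the equality cases backwards then forces $\h s=\h e$, $\langle\h x,\h y\rangle=0$, and $\h a^\top(\h x-\h y)=0$, i.e.\ an equitable bipartition; a routine compactness argument on minimizing sequences (which are automatically bounded in $\mathbb R^n_+$) rules out the infimum equalling $\gamma n^{1/p-1/q}$ in the absence of a bipartition.

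The main obstacle is the displayed scalar inequality when $q\neq 2$. For $q=2$ it collapses to a perfect square exactly as in the proof of Theorem~\ref{theo2}: fixing $\rho=\|\h s\|_2$ and using $\|\h s\|_1\le\sqrt n\,\rho$ turns the left side into $\gamma n^{1/p-1/q}+\tfrac12(\rho-\sqrt n)^2$. For general $q$ the quadratic penalty is an $L_2$ quantity while the ratio deficit $n^{1/p-1/q}-\|\h s\|_p/\|\h s\|_q$ lives in $L_p/L_q$, so I would split into two regimes: when $\h s$ is far from the ray $\{c\h e:c>0\}$ — in particular when $\|\h s\|$ is near $0$ or $\infty$ — the penalty $\tfrac12\|\h s-\h e\|_2^2$ alone already exceeds $\gamma n^{1/p-1/q}$, since the ratio stays in $[1,n^{1/p-1/q}]$; near $\h e$, a second-order expansion of the scale-invariant ratio at its maximiser $\h e$ shows the deficit is $O(\gamma\|\h s-\h e\|_2^2)$, hence dominated by $\tfrac12\|\h s-\h e\|_2^2$ once $\gamma$ lies below a threshold depending only on $n,p,q$; a rational lower bound on $(2\lambda_{\max})^{-1}$, where $\lambda_{\max}$ bounds the Hessian of $-\|\cdot\|_p/\|\cdot\|_q$ at $\h e$, is enough and keeps the reduction polynomial. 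Granting this, the reduction is polynomial-time and Theorem~\ref{theo2G} follows. The case ${\cal X}=\mathbb R^n$ is handled identically once one observes that on the affine set $\h x+\h y=\h e$ the ratio $\|\h u\|_p/\|\h u\|_q$ is still minimized, with value $n^{1/p-1/q}$, at the $0/1$ points — equivalently, an optimal $\h u$ may be taken componentwise nonnegative — so the same equality analysis goes through; the constrained models \eqref{L1o2Conpq} and the $3$-partition strengthenings to strong NP-hardness are obtained by the same substitution into the proofs of Theorems~\ref{theo1} and \ref{theo3}.
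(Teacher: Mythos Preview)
Your reduction coincides with the paper's intended route: the paper omits the proof of Theorem~\ref{theo2G} entirely, pointing only to the argument of Theorem~\ref{theo2} together with Lemma~\ref{pqineqnorm}, and you carry out precisely that substitution. You in fact go further than the paper by recognising that the global lower bound $P(\h u)\ge\gamma\, n^{1/p-1/q}$ requires justification --- the paper's own proof of Theorem~\ref{theo2} simply asserts the analogous bound --- and your two-regime argument (penalty dominates far from $\h e$, Hessian control near $\h e$, with $\gamma$ taken small) is correct in outline for $\mathcal{X}=\mathbb{R}^n_+$, though the polynomial-time specification of $\gamma$ wants more detail than a pointer to $\lambda_{\max}$.

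Your treatment of $\mathcal{X}=\mathbb{R}^n$, however, contains a genuine error. The claim that on the affine set $\{\h x+\h y=\h e\}$ the ratio $\|\h u\|_p/\|\h u\|_q$ is minimised, with value $n^{1/p-1/q}$, at the $0/1$ points is false once negative coordinates are permitted: with $p=1$, $q=2$, $n=4$, the point $\h x=(-5,0,0,0)$, $\h y=(6,1,1,1)$ lies on that affine set and gives $\|\h u\|_1/\|\h u\|_2=14/8=1.75<2=\sqrt{n}$. More fundamentally, the coordinatewise inequality $|x_i|^q+|y_i|^q\le|x_i+y_i|^q$ from Lemma~\ref{pqineq} fails when $x_i$ and $y_i$ have opposite signs, so your reduction to the scalar variable $\h s=\h x+\h y$ collapses here; the assertion ``an optimal $\h u$ may be taken componentwise nonnegative'' is likewise unjustified. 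A correct argument for $\mathbb{R}^n$ must retain the first penalty row $|\h a^\top(\h x-\h y)|^2$ to control these escape directions; discarding it via $\|\mathcal{A}\h u-\h b\|^2\ge\|\h s-\h e\|^2$ is too lossy in the sign-unconstrained setting.
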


\begin{theorem}\label{theo3G}
The minimization problem involving the \(L_p\) norm (\(0 < p \leq 1\)) over the \(L_q\) norm (\(1 < q < +\infty\)) for the unconstrained model \eqref{L1o2unconpq}, with \(\mathcal{X} = \mathbb{R}_+^n\) or \(\mathcal{X} = \mathbb{R}^n\), is strongly NP-hard.
\end{theorem}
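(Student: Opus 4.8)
\medskip
\noindent\textbf{Proof proposal for Theorem~\ref{theo3G}.}
The plan is to imitate the reduction behind Theorem~\ref{theo3}, replacing the ratio $\|\cdot\|_1/\|\cdot\|_2$ by $\|\cdot\|_p/\|\cdot\|_q$ and replacing the two elementary facts it rests on --- $\|\mathbf{x}+\mathbf{y}\|_1=\|\mathbf{x}\|_1+\|\mathbf{y}\|_1$ and $\|\mathbf{x}\|_2^2+\|\mathbf{y}\|_2^2\le\|\mathbf{x}+\mathbf{y}\|_2^2$ for $\mathbf{x},\mathbf{y}\in\mathbb{R}_+^n$ --- by the $p/q$-analogues furnished by Lemmas~\ref{pqineq} and~\ref{pqineqnorm}. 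From a $3$-partition instance $\mathcal{T}=\{a_1,\dots,a_n\}$ with $n=3m$ and target $\kappa$, I would build, exactly as in \eqref{3parUn}, the instance of \eqref{L1o2unconpq} on $\mathbf{u}=\mathrm{vec}(\mathbf{X})$, $\mathbf{X}=(\mathbf{x}_1,\dots,\mathbf{x}_m)\in\mathbb{R}_+^{n\times m}$,
\[
P(\mathbf{u})=\big\|((\mathbf{e}^{(m)})^\top\!\otimes I_n)\mathbf{u}-\mathbf{e}^{(n)}\big\|^2+\big\|(I_m\!\otimes\mathbf{a}^\top)\mathbf{u}-\kappa(\mathbf{e}^{(m)})^\top\big\|^2+\tfrac12\,\tfrac{\|\mathbf{u}\|_p}{\|\mathbf{u}\|_q},
\]
which is an instance of \eqref{L1o2unconpq} after stacking the two linear maps into one $A$, concatenating the right-hand sides into one $\mathbf{b}$, and taking $\gamma=\tfrac12$ (if needed, $A$ and $\mathbf{b}$ are additionally scaled by a polynomial factor $c(n)$ to weight the quadratic part more heavily --- see below). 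The case $\mathcal{X}=\mathbb{R}^n$ is handled identically.

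The norm-estimate block is combinatorics-free. Since $\mathbf{u}$ merely stacks the columns, $\|\mathbf{u}\|_p^p=\sum_j\|\mathbf{x}_j\|_p^p$ and $\|\mathbf{u}\|_q^q=\sum_j\|\mathbf{x}_j\|_q^q$. With $\mathbf{s}:=\sum_j\mathbf{x}_j\in\mathbb{R}_+^n$, Lemma~\ref{pqineq}(i) applied entrywise gives $\|\mathbf{s}\|_p^p=\sum_i(\sum_j x_{ij})^p\le\|\mathbf{u}\|_p^p$, hence $\|\mathbf{u}\|_p\ge\|\mathbf{s}\|_p$, while Lemma~\ref{pqineq}(ii) entrywise gives $\|\mathbf{s}\|_q^q\ge\|\mathbf{u}\|_q^q$, hence $\|\mathbf{u}\|_q\le\|\mathbf{s}\|_q$. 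Thus $\|\mathbf{u}\|_p/\|\mathbf{u}\|_q\ge\|\mathbf{s}\|_p/\|\mathbf{s}\|_q$, and by the equality clauses of Lemma~\ref{pqineqnorm} equality holds exactly when the columns $\mathbf{x}_1,\dots,\mathbf{x}_m$ have pairwise disjoint supports (for $p=1$ the numerator estimate is an unconditional equality, so the disjointness must be read off from the $q$-side alone --- a minor but genuine point). Since the first quadratic term equals $\|\mathbf{s}-\mathbf{e}^{(n)}\|^2$ and the second is $\ge0$,
\[
P(\mathbf{u})\ \ge\ \|\mathbf{s}-\mathbf{e}^{(n)}\|^2+\tfrac12\,\tfrac{\|\mathbf{s}\|_p}{\|\mathbf{s}\|_q}\qquad(\mathbf{s}\in\mathbb{R}_+^n\setminus\{\mathbf{0}\}).
\]
The forward direction is then immediate: an equitable $3$-partition yields the $0/1$ matrix $\mathbf{X}$ with one $1$ per row, three per column and disjoint column supports, so $\mathbf{s}=\mathbf{e}^{(n)}$, $\mathbf{a}^\top\mathbf{X}=\kappa(\mathbf{e}^{(m)})^\top$, $\|\mathbf{u}\|_p=n^{1/p}$, $\|\mathbf{u}\|_q=n^{1/q}$, whence $P(\mathbf{u})=\tfrac12 n^{1/p-1/q}$.

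The converse is where the real work lies and is the step I expect to be the main obstacle. Unlike the constrained models of Theorems~\ref{theo0G}--\ref{theo1G}, where $\mathbf{s}=\mathbf{e}^{(n)}$ is enforced exactly and $\|\mathbf{u}\|_p/\|\mathbf{u}\|_q\ge n^{1/p-1/q}$ is automatic, here $\|\mathbf{u}\|_p/\|\mathbf{u}\|_q$ can fall well below $n^{1/p-1/q}$ for sparse $\mathbf{u}$, so one must control the trade-off against the quadratic penalty. Everything reduces to the scalar claim
\[
\min_{\mathbf{s}\in\mathbb{R}_+^n\setminus\{\mathbf{0}\}}\Big(\|\mathbf{s}-\mathbf{e}^{(n)}\|^2+\tfrac12\,\tfrac{\|\mathbf{s}\|_p}{\|\mathbf{s}\|_q}\Big)=\tfrac12 n^{1/p-1/q},\ \ \text{attained only at }\mathbf{s}=\mathbf{e}^{(n)}.
\]
For $p=1,q=2$ this is the clean one-variable estimate implicit in Theorem~\ref{theo3}: with $\sigma=\sum_i(s_i-1)$, Cauchy--Schwarz gives $\|\mathbf{s}-\mathbf{e}\|^2\ge\sigma^2/n$, which exactly offsets the deficit $\tfrac12\sqrt n-\tfrac12\|\mathbf{s}\|_1/\|\mathbf{s}\|_2$, so $\gamma=\tfrac12$ already works with no further weighting. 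For general $p,q$ I would instead argue via (a) a second-order expansion of $\mathbf{s}\mapsto\|\mathbf{s}\|_p/\|\mathbf{s}\|_q$ at $\mathbf{e}^{(n)}$ --- its gradient vanishes there (scale invariance plus $\mathbf{e}^{(n)}$ being a maximizer on each level set of $\|\cdot\|_q$ by the power-mean inequality) and its Hessian is negative semidefinite with operator norm polynomial in $n$ --- so that after scaling $A,\mathbf{b}$ by a polynomial factor $c(n)$ the bracketed function is strictly convex near $\mathbf{e}^{(n)}$ with minimizer $\mathbf{e}^{(n)}$; and (b) the global bounds $\|\mathbf{s}\|_p/\|\mathbf{s}\|_q\in[1,n^{1/p-1/q}]$ together with coercivity of $\|\mathbf{s}-\mathbf{e}^{(n)}\|^2$ to rule out competing minimizers far from $\mathbf{e}^{(n)}$.

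Granting the scalar claim, any optimal $\mathbf{u}$ must have $\mathbf{s}=\mathbf{e}^{(n)}$, $\mathbf{a}^\top\mathbf{X}=\kappa(\mathbf{e}^{(m)})^\top$ and disjoint column supports; exactly as in Theorem~\ref{theo1}, $\mathbf{s}=\mathbf{e}^{(n)}$ together with disjoint supports makes $\mathbf{X}$ a $0/1$ matrix with a single $1$ per row, and then $\kappa/4<a_i<\kappa/2$ forces precisely three rows per column --- an equitable $3$-partition of $\mathcal{T}$. Hence the decision question ``is the optimal value of $P$ at most $\tfrac12 n^{1/p-1/q}$?'' is polynomial-time equivalent to solvability of the $3$-partition instance. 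Since $3$-partition is strongly NP-hard and the constructed data ($\mathbf{a}$, $\kappa$, the Kronecker-structured $A$, $\mathbf{b}$, the factor $c(n)$, and $\gamma=\tfrac12$) are polynomially bounded in the magnitudes of the input, this shows \eqref{L1o2unconpq} is strongly NP-hard; the $\mathcal{X}=\mathbb{R}^n$ variant follows by the same construction. (Theorems~\ref{theo0G}--\ref{theo2G} are obtained along the same template, with the constrained cases being simpler because $\mathbf{s}=\mathbf{e}^{(n)}$ is then a hard constraint, removing the trade-off analysis of the obstacle step.)
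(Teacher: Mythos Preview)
Your proposal follows exactly the approach the paper indicates: the paper says only that the proof of Theorem~\ref{theo3G} proceeds ``by employing the analysis similar to the previous section and invoking Lemma~\ref{pqineqnorm},'' with details omitted, and your reduction from 3-partition, your instance construction mirroring \eqref{3parUn}, and your use of Lemma~\ref{pqineq} entrywise to obtain $\|\mathbf{u}\|_p/\|\mathbf{u}\|_q \ge \|\mathbf{s}\|_p/\|\mathbf{s}\|_q$ (with equality iff the columns have disjoint supports) all match that template.

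Where you go beyond the paper is in isolating the converse trade-off --- your ``scalar claim.'' The paper's own proof of Theorem~\ref{theo3} writes $P(\mathbf{u}) \ge \tfrac{1}{2}\,\|\sum_j\mathbf{x}_j\|_1/\|\sum_j\mathbf{x}_j\|_2 = \tfrac{1}{2}\sqrt{n}$ as though the final equality were automatic, but $\|\mathbf{s}\|_1/\|\mathbf{s}\|_2=\sqrt{n}$ holds only when $\mathbf{s}$ is a positive multiple of $\mathbf{e}^{(n)}$, which the unconstrained model does not enforce; the paper does not address this, and the omitted proof of Theorem~\ref{theo3G} inherits the same lacuna. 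You are right to flag it as the step needing work. Your proposed remedy --- weighting the quadratic by a polynomial factor $c(n)$ and combining a local Hessian estimate at $\mathbf{e}^{(n)}$ with the global bound $\|\mathbf{s}\|_p/\|\mathbf{s}\|_q\in[1,n^{1/p-1/q}]$ and coercivity --- is a sound strategy, though part~(b) (excluding minimizers far from $\mathbf{e}^{(n)}$) will require you to quantify how large $c(n)$ must be; this is the only place your outline is not yet a proof.
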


\section{Discussion and conclusion}\label{dis}

The existing computational methods \cite{RWDL19,TaoZhang23, WYYL20, ELX13, YEX14, Tao20, 10542092, ZengYuPong20, BRDL22, Sorted24} for $L_1/L_2$ minimization tend to converge to d-stationary or critical points rather than global solutions. Exact recovery conditions for global solutions of (\ref{L1o2Con}) have been derived in \cite{XU2021486}. This paper presents a formal proof of the difficulty in finding the global optimizer for both (\ref{L1o2Con}) and (\ref{L1o2uncon}).

To transition from a d-stationary or critical point to a local minimizer, the negative curvature algorithm \cite{Curtis2017} can be employed. Furthermore, to progress from a local minimizer to a global solution, the simulated annealing (SA) algorithm \cite{FENGMIN20131577} can be used. However, it is important to note that convergence to a global solution is guaranteed only with probability 1.

In this work, by leveraging our previous results \cite{TaoZhang23}, we have derived a uniform upper bound for any local minimizer of the constrained/unconstrained model in terms of the $L_2$-norm. Additionally, by applying the first- and second-order necessary conditions for a local minimizer, we have established upper/lower bounds for the nonzero entries of any local solution to the unconstrained  model. Moreover, we have shown that the constrained and unconstrained $L_1/L_2$ models are strongly NP-hard. Finally, we have demonstrated that minimizing the $L_p$ norm ($0 < p \leq 1$) over the $L_q$ norm ($1 < q < +\infty$) is also strongly NP-hard.
	%\bibliographystyle{siamplain}
%\bibliography{refer6}

\end{document}